\sloppy \pagestyle{plain} 
\makeatletter \@addtoreset{equation}{section} \makeatother
\newcommand{\muu}{{\boldsymbol{\mu}}}
\newcommand{\Sing}{\operatorname{Sing}}
\newcommand{\an}{\operatorname{an}}
\newcommand{\Supp}{\operatorname{Supp}}
\newcommand{\wt}{\operatorname{wt}}
\newcommand{\mt}[1]{\operatorname{#1}}
\newcommand{\ov}[1]{\overline{#1}}
\newcommand{\B}{{\mathbf B}}
\newcommand{\CC}{\mathbb{C}}
\newcommand{\QQ}{\mathbb{Q}}
\newcommand{\ZZ}{\mathbb{Z}}
\newcommand{\PP}{\mathbb{P}}
\newcommand{\OOO}{{\mathscr{O}}} 
\newcommand{\down}[1]{\left\lfloor #1\right\rfloor}
\newcommand{\basket}[1]{(#1)}
\newcommand{\type}[1]{$I_{#1}$}
\def\stackunder#1#2{\mathrel{\mathop{#2}\limits_{#1}}}%
\renewcommand\labelenumi{(\roman{enumi})}
\newcommand{\comment}[1]{}
\newcommand{\xref}[1]{{\rm \ref{#1}}}
\newtheorem{theorem}[equation]{Theorem}
\newtheorem{proposition}[equation]{Proposition}
\newtheorem{lemma}[equation]{Lemma}
\newtheorem{corollary}[equation]{Corollary}
\newtheorem{conjecture}[equation]{Conjecture}
\theoremstyle{definition}
\newtheorem{example}[equation]{Example}
\newtheorem{discussion}[equation]{}
\newtheorem{construction}[equation]{Construction}
\newtheorem{computation}[equation]{}
\newtheorem{notation}[equation]{Notation}
\newtheorem*{warning}{Warning}
\title{Multiple fibers of del Pezzo fibrations}
\author{Shigefumi Mori}
\author{Yuri Prokhorov}
\address{Shigefumi Mori: RIMS, 
Kyoto University, Oiwake-cho, Kitashirakawa, Sakyo-ku, Kyoto
606-8502, Japan}
\email{mori@kurims.kyoto-u.ac.jp}
\address{Yuri Prokhorov: Department 
of Algebra, Faculty of Mathematics, Moscow State
University, Moscow 117234, Russia}
\email{prokhoro@mech.math.msu.su}
\subjclass{14J30, 14E35, 14E30}
\thanks {
The research of the first author was supported by JSPS Grant-in-Aid for
Scientific Research (B)(2), Nos. 16340004 and 20340005.
The second author was
partially supported by 
RFBR, Nos. \ 08-01-00395-a and 06-01-72017-MHTI-a.}
\begin{document}
\maketitle
\begin{abstract}
We prove that a terminal three-dimensional del Pezzo fibration 
has no fibers of multiplicity 
$\ge 6$.
We also obtain a rough classification possible configurations of singular points 
on multiple fibers and give some examples.
\end{abstract}

\section{Introduction}
Throughout this paper a \textit{weak del Pezzo fibration} is
a projective morphism $f\colon X\to Z$ with connected fibers 
from a threefold $X$ with 
terminal singularities to a smooth curve $Z$ 
such that $-K_X$ is $f$-nef and $f$-big near a general fiber. 
If additionally $-K_X$ is $f$-ample, we say that $f\colon X\to Z$
is a \textit{del Pezzo bundle}.
(We do not assume that $X$ is $\QQ$-factorial nor $\rho(X/Z)=1$).
The main reason to study del Pezzo fibrations
comes from the three-dimensional birational geometry,
namely the class of del Pezzo bundles with $\QQ$-factorial singularities and 
relative Picard number one is one of three possible outcomes of 
the minimal model program for threefolds of negative Kodaira dimension.

Our main result is the following.
\begin{theorem}
\label{main}
Let $f\colon X\to Z$ be a weak del Pezzo fibration and let $f^*(o)=m_oF_o$
be a special fiber of multiplicity $m_o$.
Then $m_o\le 6$. 
Moreover, all the cases $1\le m_o\le 6$ occur.
Furthermore, 
let $\B(F_o)=\basket{r_1,\dots,r_n}$ be the basket of singular 
points of $X$ at which $F_o$ is not Cartier.
Then, in the case $m_o\ge 2$, there are only the following 
possibilities:
\begin{enumerate}
\item 
$m_o= 2$, $\B(F_o)=\basket{8}$, 
$\basket{2,6}$, 
$\basket{4,4}$,
$\basket{2, 2, 4}$, or
$\basket{2, 2, 2, 2}$,
\item 
$m_o= 3$, $\B(F_o)=\basket{9}$,
$\basket{3, 3, 3}$, or
$\basket{3,6}$, 
\item 
$m_o= 4$, $\B(F_o)=\basket{2, 4, 4}$, 
\item 
$m_o= 5$, $\B(F_o)=\basket{5,5}$,
\item 
$m_o= 6$, $\B(F_o)=\basket{2, 3, 6}$.
\end{enumerate}
The possible types of 
singularities in $\B(F_o)$, 
the local behavior of $F_o$ near singular points, and 
the possible types of 
a general fiber
are collected in Table \xref{table}.
\end{theorem}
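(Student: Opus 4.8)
The statement is local over $Z$, so the plan is to work in an analytic neighbourhood of the scheme-theoretic fibre $m_oF_o=f^*(o)$ and to translate everything into the geometry of the single surface $F_o$. First I would establish that $F_o$ is a normal surface with only cyclic quotient (``$T$-type'') singularities, occurring exactly at the points of $\B(F_o)$; this follows from $X$ being terminal and $m_oF_o$ Cartier, via inversion of adjunction and the structure theory of terminal threefold singularities. Since $m_oF_o=f^*(o)$ and $F_o$ is contracted to the point $o$, the class $F_o|_{F_o}$ is $m_o$-torsion, hence numerically trivial, and adjunction gives
\[
K_{F_o}+\operatorname{Diff}_{F_o}(0)=(K_X+F_o)|_{F_o}\equiv K_X|_{F_o},
\]
which is anti-nef (as $-K_X$ is $f$-nef) and has positive self-intersection $d/m_o$, where $d=(-K_X|_F)^2\ge 1$ is the degree of a general fibre $F$. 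Hence $(F_o,\operatorname{Diff}_{F_o}(0))$ is a log del Pezzo pair, so $F_o$ is a rational surface and $\chi(\OOO_{F_o})=1$.

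Next I would carry out the local classification at each $P_i\in\B(F_o)$. Writing $f$ near $P_i$ as a semi-invariant function whose divisor is $m_oF_o$, and running through Mori's classification of terminal threefold singularities together with the admissible positions of the Weil divisor $F_o$, one enumerates the possible analytic germs $(X\ni P_i,\;F_o,\;f)$. Each surviving germ records its index $r_i$, the torsion order of the class of $F_o$ at $P_i$ (a divisor of $\gcd(m_o,r_i)$), the local equations of $F_o$ and of a general fibre, and a local Riemann--Roch contribution. This enumeration, and the verification that no further local configurations survive the global constraint below, is the step I expect to be the main obstacle.

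The global input is a Riemann--Roch identity on $F_o$. Filtering the structure sheaf of the scheme $m_oF_o$ by divisorial powers of $F_o$ gives
\[
1=\chi(\OOO_{m_oF_o})=\sum_{k=0}^{m_o-1}\chi\bigl(\OOO_X(-kF_o)\otimes\OOO_{F_o}\bigr),
\]
and since $F_o|_{F_o}$ is numerically trivial, every ``continuous'' term $\tfrac12(-kF_o)\cdot(-kF_o-K_{F_o})$ in orbifold Riemann--Roch on $F_o$ vanishes; using $\chi(\OOO_{F_o})=1$ and summing, only the Dedekind-sum-type contributions at the $P_i$ remain, and one obtains an identity of the shape $\sum_i S_i=1-m_o$, where each $S_i<0$ is a universal function of the germ at $P_i$. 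Substituting the finite list of germs from the previous step turns this into a Diophantine constraint on the $r_i$; solving it bounds the number of basket points, forces $m_o\le 6$, and yields precisely the baskets (i)--(v), while the germs attached to each solution fill in Table \xref{table}.

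Finally, to show that all cases $1\le m_o\le 6$ actually occur, I would construct explicit weak del Pezzo fibrations realising each admissible basket: for the baskets coming from diagonalisable actions, as cyclic quotients $(S\times\CC)/\muu_{m_o}$ of a product with a suitably $\muu_{m_o}$-equivariant del Pezzo surface $S$; and for the baskets with a single heavy point, as weighted blow-ups of such quotients or as families inside weighted projective bundles.
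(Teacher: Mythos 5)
There are two genuine gaps. First, your opening reduction --- that $F_o$ is a normal surface with only cyclic quotient ($T$-type) singularities, ``via inversion of adjunction'' --- is unjustified. Inversion of adjunction would require knowing that $(X,F_o)$ is PLT near $F_o$, which is not automatic from $X$ terminal and $m_oF_o$ Cartier; a priori $F_o$ can be reducible, non-normal, or worse than log terminal. The paper deliberately avoids this: it runs orbifold Riemann--Roch on the threefold $X$ itself (computing $\chi(jD)$ for $j\gg 0$ and $\chi(-K_X-aF_o)$), so that the only local data needed are the basket invariants $(r_P,b_P,q_P)$, with no hypothesis on the surface $F_o$. Indeed, the statement that $F_o$ is well behaved is treated in \S 6 of the paper as conjectural (Conjecture \xref{conj-non-Gor}), proved only under extra hypotheses such as $F_o$ log terminal. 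The same problem undermines your proposed ``local classification of germs $(X\ni P_i,F_o,f)$'': this step, which you flag as the main obstacle, is both unexecuted and unnecessary on the paper's route.

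Second, the numerical endgame is underdetermined as stated. Your single summed identity $\sum_iS_i=1-m_o$ is strictly weaker than what is needed: the paper derives one identity for \emph{each} twist, $\sum_P c_P(lD)=-l/m$ for $l=0,\dots,m-1$ (its (4.3)), evaluates the local sums in closed form (Lemma \xref{lemma-Sp}, giving $(m+1)\sum r_P=12m$ for $m$ prime), and still must separately exclude the solutions $m=5$, $\B=(10)$ and $m=11$, $\B=(11)$. More importantly, to bound composite $m_o$ (ruling out $8$, $9$, $10$, $12$, \dots) the paper needs a genuinely geometric input with no counterpart in your plan: Proposition \xref{cor-point}, the existence of a point of $F_o$ whose index is divisible by $m_o$, proved via base change, $\muu_{m_o}$-equivariant MMP, minimal LC centres, the Connectedness Lemma, and Lefschetz fixed points. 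Without this (or an explicit verification that the full system of congruence/Riemann--Roch constraints has no composite solutions), the assertion that the Diophantine constraint ``forces $m_o\le 6$ and yields precisely the baskets (i)--(v)'' is exactly the content of the theorem, asserted rather than proved. The existence part of your proposal (cyclic quotients of $\muu_{m_o}$-equivariant del Pezzo families) does match the paper's \S 5, but even there one must prove terminality of the quotient, which is where inversion of adjunction is legitimately used --- in the direction opposite to the one you invoke.
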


\begin{warning}
In the statement of Theorem \ref{main}
and Table \xref{table} we do not assert that
the basket $\B(F_o)$ contains all the singularities along 
$F_o$. It is possible that $F_o$ is Cartier at some non-Gorenstein points
(see Example \ref{ex-not-basket}).
\end{warning}

\begin{table}[h]
\renewcommand{\extrarowheight}{5pt}
\renewcommand{\arraystretch}{1.3}
\caption{}
\label{table}
\begin{tabular}{|l|l|l|l|l|c|}
\hline
type&$m_o$&$\B(F_o)=$&$(b_1,\dots,b_n)$&$q_i$&$K_{F_g}^2$
\\[-10pt]
&&$(r_1,\dots,r_n)$&&&
\\
\hline\hline
\type{2, 3, 6}&
$6$& 
$\basket{ 2, 3, 6}$
&
$(1,\pm 1,\pm 1)$
&
$q_i\equiv -1$
&
6
\\ \hline
\type{5, 5}&
$5$
& 
$\basket{5, 5}$
&$b_1^2+b_2^2\equiv 0$
&
$q_i\equiv -1$
&
5
\\ \hline
\type{2, 4, 4}&
$4$
& 
$\basket{2, 4, 4}$
&
$(1,\pm 1,\pm 1)$
&
$q_i\equiv -1$
&
4,\, 8
\\ \hline
\type{3, 3, 3}&
$3$
& 
$\basket{3, 3, 3}$
&
$(\pm 1,\pm 1,\pm 1)$
&
$q_i\equiv -1$
&
3,\, 6,\, 9
\\ \hline
\type{2, 2, 2, 2}&
$2$
&
$\basket{2, 2, 2, 2}$
&
$(1,1,1,1)$ 
&
$q_i\equiv 1$ 
&
even
\\ \hline
\type{3, 6}&
$3$
& 
$\basket{3, 6}$
&
$(\pm 1,\pm 1)$
&
$q_i\equiv 4$
&
3,\, 6,\, 9
\\ \hline
\type{9}&
$3$ 
&
$\basket{9}$ 
&
$b_1=\pm 2q_1/3$
& 
$q_1=3$ or $6$
&
$\equiv q_1/3\mod 3$ 
\\ \hline
\type{2, 2, 4}&
$2$
&
$\basket{2, 2, 4}$
&
$(1,1,\pm 1)$
&
$q_i\equiv r_i/2$
&
odd
\\ \hline
\type{4, 4}&
$2$&$\basket{4, 4}$
&
$(\pm 1,\pm 1)$
&
$q_i\equiv r_i/2$
&
even
\\ \hline
\type{2, 6}&
$2$& $\basket{2, 6}$
&
$(1, \pm 1)$
&
$q_i\equiv r_i/2$
&
even
\\ \hline
\type{8}&
$2$&$\basket{8}$
&
$(\pm 1)$ or $(\pm 3)$ 
&
$q_i\equiv r_i/2$
&
odd
\\ \hline
\end{tabular}
\end{table}

The idea of the proof is easy. In fact, 
it is sufficient to compute dimensions of 
linear systems $\dim |lF_o|$ 
by using the orbifold Riemann-Roch formula \cite{Reid-YPG1987}.
The main theorem is proved in \S\S \ref{sect-prep} -- \ref{sect-proof}.
In \S \ref{sect-exam} we give some examples.
In fact, it will be shown that 
all cases in Table \ref{table}
except possibly for cases \type{2,6} and \type{8} occur. 
Finally, in \S \ref{sect-45} we discuss 
fibers of multiplicity $5$ and $6$.

\par\medskip\noindent
\textbf{Notation in Table \ref{table}.}
The number $b_k$ in the fourth column 
is the weight which appears in a singularity 
$\frac{1}{r_k}(1,-1, b_k)\in \B(F_o)$,
$q_k$ in the fifth column is an integer such that $F_o\sim q_kK_X$ near $P_k\in \B(F_o)$.
$F_g$ in the final column denotes a general fiber of $f$.
We say that the fiber $f^*(o)=m_oF_o$
is of type \type{r_1,\dots,r_n} if $\B(F_o)=\basket{r_1,\dots,r_n}$.

We also say that the fiber $F_o$
is \textit{regular} if it is of type \type{2,3,6},
\type{5,5}, \type{2,4,4}, \type{3,3,3}, or
\type{2,2,2,2}. Otherwise, $F_o$ is said to be \textit{irregular}.

\par\medskip\noindent
\textbf{Acknowledgments.}
Much of this 
work was done at the Research Institute for Mathematical
Sciences (RIMS), Kyoto University in February 2008. 
The second author would like to
thank RIMS for invitation to work there, for
hospitality and stimulating working environment.

\section{Preliminaries}
\label{sect-prelim}
\begin{discussion} 
\textbf{Terminal singularities \cite{Mori-1985-cla}, \cite{Reid-YPG1987}.}
Let $(X,P)$ be a three-dimensional terminal singularity
of index $r$ and let $D$ be a Weil $\QQ$-Cartier divisor on $X$.
\end{discussion}

\begin{lemma}[{\cite[Corollary 5.2]{Kawamata-1988-crep}}]
\label{lemma-Kawamata-index-divisors}
In the above notation, 
there is an integer $i$ such that $D\sim iK_X$ near $P$.
In particular, $rD$ is Cartier.
\end{lemma}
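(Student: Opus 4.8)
The plan is to reduce everything to the index-one (canonical) cover $\pi\colon (X^\sharp,P^\sharp)\to(X,P)$, where $X^\sharp=\operatorname{Spec}_X\bigoplus_{j=0}^{r-1}\OOO_X(-jK_X)$ is the cyclic $\muu_r$-cover determined by $K_X$; it is étale over $X\setminus P$ and $K_{X^\sharp}=\pi^*K_X$ is Cartier. First I would record that, since finite covers étale in codimension one preserve terminality, $(X^\sharp,P^\sharp)$ is a terminal \emph{Gorenstein} threefold point; by Reid's description \cite{Reid-YPG1987} such a point is an isolated compound Du Val singularity, in particular an isolated three-dimensional hypersurface singularity.

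Next I would show that $\Pic$ of the punctured neighbourhood $U^\sharp:=X^\sharp\setminus P^\sharp$ has no torsion. By a theorem of Milnor the link of an isolated hypersurface singularity of complex dimension three is a simply connected closed $5$-manifold; hence $H_1(U^\sharp;\ZZ)=0$, so $H^1(U^\sharp;\ZZ)=0$ and $H^2(U^\sharp;\ZZ)$ is torsion free by the universal coefficient theorem. The exponential sequence on $U^\sharp$ then gives a left-exact sequence $0\to H^1(U^\sharp,\OOO_{U^\sharp})\to\Pic(U^\sharp)\to H^2(U^\sharp;\ZZ)$ whose outer terms are torsion free, so $\Pic(U^\sharp)=\operatorname{Cl}(X^\sharp,P^\sharp)$ is torsion free. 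Equivalently, every $\QQ$-Cartier Weil divisor on $X^\sharp$ is Cartier near $P^\sharp$.

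Finally I would descend. The reflexive pull-back $\pi^{[*]}D$ is $\QQ$-Cartier near $P^\sharp$, hence by the previous step $\OOO_{X^\sharp}(\pi^{[*]}D)\cong\OOO_{X^\sharp}$ near $P^\sharp$. This sheaf carries a natural $\muu_r$-linearization (being $\OOO_{X^\sharp}(\,\cdot\,)$ of a $\muu_r$-invariant divisor) and, being free of rank one over the local ring, its linearization differs from the standard one on $\OOO_{X^\sharp}$ by a single character. Taking $\muu_r$-invariants of the push-forward and using the eigensheaf decomposition $\pi_*\OOO_{X^\sharp}=\bigoplus_{j=0}^{r-1}\OOO_X(-jK_X)$, one gets $\OOO_X(D)\cong\big(\pi_*\OOO_{X^\sharp}(\pi^{[*]}D)\big)^{\muu_r}\cong\OOO_X(iK_X)$ near $P$ for a suitable integer $i$, i.e. $D\sim iK_X$ near $P$; and since $rK_X$ is Cartier near $P$ by definition of the index, $rD\sim riK_X$ is Cartier there.

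The step I expect to be the main obstacle is the first one: identifying the index-one cover as a hypersurface singularity (Reid) whose smooth locus is simply connected (Milnor). Once that is in place the rest is a formal manipulation of the $\muu_r$-eigensheaf decomposition. A more computational alternative would be to invoke the Reid--Mori classification of terminal threefold singularities and read off $\operatorname{Cl}(X,P)$ directly, but the cover avoids the case analysis.
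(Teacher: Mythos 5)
Your argument is correct and complete: the paper itself gives no proof of this lemma, citing instead Kawamata's Corollary 5.2, and your chain (index-one cover is terminal Gorenstein, hence an isolated cDV and thus hypersurface point; Milnor's simple connectivity of the link kills torsion in $\Pic$ of the punctured neighbourhood; then descent through the $\muu_r$-eigensheaf decomposition of $\pi_*\OOO_{X^\sharp}$) is essentially the standard proof underlying that citation. No gaps; the only point worth making explicit is that $\QQ$-Cartier-ness of $D$ transfers to the reflexive pullback because $\pi$ is \'etale in codimension one, which you use implicitly and which is immediate.
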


\begin{discussion} 
\label{not-01}
Notation as above. 
There is a deformation $X_\lambda$ of $X$ such that
$X_\lambda$ has only cyclic quotient singularities
$(X_\lambda,P_{\lambda,k})\simeq \frac {1}{r_k}(1,-1,b_k)$,
$0<b_k<r_k$, $\gcd(b_k, r_k)=1$.
Thus, to every theefold $X$ with terminal singularities,
one can associate 
a collection $\B=\basket{(r_{P,k},b_{P,k})}$, where
$P_{\lambda,k}\in X_{\lambda,k}$ is a singularity
of type $\frac 1{r_{P,k}}(1,-1,b_{P,k})$.
This collection is called 
the \textit{basket} of singularities of $X$. By abuse of 
notation, we also will write $\B=\basket{r_{P,k}}$ instead of 
$\B=\basket{(r_{P,k},b_{P,k})}$.
The index of $(X,P)$ is the least common multiple
of indices of points $P_{\lambda,k}$.
For any Weil divisor $D$, $\B(D)\subset \B$ denotes the collection of 
points where $D$ is not Cartier.

Deforming $D$ with $(X,P)$ 
we obtain Weil divisors $D_{\lambda}$ on $X_\lambda$.
Thus we have a collection of numbers $q_k$ such that
$0\le q_k<r_k$ and 
$D_{\lambda}\sim q_kK_{X_\lambda}$ near $P_{\lambda,k}$.

\end{discussion}

\begin{discussion} 
\textbf{Orbifold Riemann-Roch formula \cite{Reid-YPG1987}.}
Let $X$ be a threefold with terminal singularities 
and let $D$ be a Weil $\QQ$-Cartier divisor on $X$.
Then
\begin{multline}
\label{eq-RR}
\chi(D)=
\frac1{12}D\cdot (D-K_X)\cdot (2D-K_X)+
\\
+\frac1{12}D\cdot c_2(X)+\chi(\OOO_X)+\sum_{P\in \B} c_P(D),
\end{multline}
where 
\begin{equation}
\label{eq-RR-cP-def}
c_P(D)=-q_P\frac{r_P^2-1}{12r_P}+\sum_{j=1}^{q_P-1}\frac{\ov{b_Pj}(r_P-\ov{b_Pj})}{2r_P},
\end{equation}
$q_P$ is such as in \ref{not-01}, and 
$\overline{\phantom{X}}$ denotes the smallest residue $\mod r_P$.

Assume that 
$D^2\equiv 0$. Then 
\begin{equation}
\label{eq-RR-D2=0} 
\chi(D)=
\frac1{12}D\cdot K_X^2+\frac1{12} D\cdot c_2(X)+\chi(\OOO_X)
+\sum c_P(D).
\end{equation}
We have (see, e.~g., \cite[proof of 2.13]{Alexeev-1994ge})
\begin{equation}
\label{eq-RR-cPK-K}
c_P(-K)=\frac{r_P^2-1}{12r_P}-\frac{b_P(r_P-b_P)}{2r_P},
\qquad 
c_P(K)=-\frac{r_P^2-1}{12r_P}.
\end{equation}
\end{discussion}

\begin{construction}[Base change]
\label{base-change}
Let $f\colon X\to Z$ be a weak del Pezzo fibration
and let $f^*(o)=m_oF_o$
be a special fiber of multiplicity $m_o$.
Regard $f\colon X\to (Z,o)$ as a germ. Let 
$(\CC,0)\simeq (Z',o')\to (Z,o)\simeq (\CC,0)$ is given by $t \mapsto t^{m_o}$
and let $X'$ be the normalization of $X\times_Z Z'$. We obtain the 
following commutative diagram:
\begin{equation}
\label{eq-base-change}
\xymatrix{
X'\ar[r]^{\pi}\ar[d]_{f'}&X\ar[d]^{f}
\\
Z'\ar[r]&Z
}
\end{equation}
Here $f'$ is a weak del Pezzo fibration with 
special fiber $F_o'=f'^{*}o'=\pi^{*}F_o$ of multiplicity $1$ and
$\pi$ is a $\muu_{m_o}$-cover which is 
\'etale outside of the set $M$ of points where 
$F_o$ is not Cartier.
Hence there is a $\muu_{m_o}$-action on 
$X'$ such that $X=X'/\muu_{m_o}$ and the action is free 
outside of $M$.

Conversely, let $f'\colon X'\to Z'\ni o'$ be a weak del Pezzo fibration 
with central fiber of multiplicity $1$. Assume that $f$
equipped with an equivariant $\muu_{m_o}$-action such that 
the action on $X'$ is \'etale in codimension two. 
If the quotient $X'/\muu_{m_o}$ has only terminal singularities, 
then $X'/\muu_{m_o}\to Z'/\muu_{m_o}$ is a weak del Pezzo fibration
with special fiber of multiplicity $m_o$.
\end{construction}

\begin{proposition}
\label{cor-point}
Let $f\colon X\to Z$ be a weak del Pezzo fibration. Let $f^*(o)=m_oF_o$
be a special fiber of multiplicity $m_o$.
There is a point $P\in F_o$ such that the 
index of $F_o$ at $P$ is divisible by $m_o$. 
\end{proposition}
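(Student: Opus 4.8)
The plan is to argue by contradiction: suppose that $F_o$ is Cartier at every point $P$ whose index is divisible by $m_o$ — in particular, strengthen this to assume that the index $r_P$ of $X$ at every point $P\in F_o$ at which $F_o$ fails to be Cartier is coprime to $m_o$ (or more precisely, that $m_o \nmid r_P$ for all $P$; one should think about which of these is the right hypothesis to negate, but the essential point is the same). I would then invoke the base change Construction~\ref{base-change}: form the degree-$m_o$ cover $(Z',o')\to(Z,o)$, $t\mapsto t^{m_o}$, let $X'$ be the normalization of $X\times_Z Z'$, and obtain the $\muu_{m_o}$-cover $\pi\colon X'\to X$ which is étale away from the locus $M$ where $F_o$ is not Cartier, with $X'/\muu_{m_o}=X$ and a free $\muu_{m_o}$-action outside $M$. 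The key local fact is that $\pi$ is étale precisely where $F_o$ is Cartier; away from such points the ramification of $\pi$ over $P$ is governed by the order of $F_o$ in the local class group, hence by $r_P$ (via Lemma~\ref{lemma-Kawamata-index-divisors}, $F_o\sim qK_X$ locally and $r_P F_o$ is Cartier).

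The main step is then a local analysis of the cover $\pi$ at a point $P\in M$. Locally $F_o$ generates a cyclic subgroup of the local class group of order some $d_P\mid r_P$ dividing $m_o$ only if the corresponding partial cover is étale in codimension one; since $\pi$ itself has group $\muu_{m_o}$ and is étale in codimension two (the hypothesis guaranteeing $X'$ is again terminal), the stabilizer of a preimage of $P$ in $\muu_{m_o}$ must be a cyclic group whose order divides both $m_o$ and the relevant local invariant. If $\gcd(m_o, r_P)=1$ for every $P\in M$, I would deduce that $\pi$ is in fact étale everywhere in codimension one over a neighbourhood of $F_o$: the only possible ramification is along the multiple fiber structure itself, but that is killed by normalizing after the base change $t\mapsto t^{m_o}$. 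Therefore $\pi$ is a connected étale $\muu_{m_o}$-cover of (a contractible neighbourhood of) $F_o$. But $F_o$ — or rather a suitable neighbourhood of it in $X$, which deformation-retracts onto $F_o$ or at worst onto something simply connected enough — supports no such nontrivial cover, since $X\to Z$ has connected fibers and $(Z,o)$ is a disc; more precisely $\pi^{-1}(F_o)=F_o'$ is connected (it is the central fiber $f'^*(o')$ with multiplicity one), contradicting that a nontrivial connected étale $\muu_{m_o}$-cover has disconnected preimage of a point — or, dually, contradicting that $F_o'\to F_o$ would then be an honest étale $\ZZ/m_o$-cover of the connected variety $F_o$ while simultaneously $\pi|_{F_o'}$ being the quotient map forces $\chi(\OOO_{F_o'})=m_o\chi(\OOO_{F_o})$, incompatible with $F_o'$ being a fiber of a del Pezzo fibration for $m_o\ge 2$.

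I expect the main obstacle to be making the topological/étale-fundamental-group argument airtight in the presence of the terminal (quotient) singularities on $F_o$ and in controlling exactly which points can ramify: one must be careful that "étale in codimension two" for the $\muu_{m_o}$-action does not by itself forbid ramification divisors inside $X'$ lying over $F_o$, so the real content is showing that under $\gcd(m_o,r_P)=1$ the cover $\pi$ has no ramification divisor at all near $F_o$, i.e. that the multiple-fiber ramification is the only source and it disappears after normalization. Once that is established, the contradiction with connectedness of $F_o'$ (equivalently $\rho$ or Euler-characteristic bookkeeping) is routine. An alternative, perhaps cleaner route would be purely cohomological: apply the orbifold Riemann–Roch formula~\eqref{eq-RR} to $D=F_o$ and to its multiples $lF_o$ on $X$, use that $m_oF_o=f^*(o)\sim 0$ over the germ $(Z,o)$ so that $F_o$ is $m_o$-torsion in $\Pic$, and extract a congruence mod $m_o$ on $\sum_P c_P(F_o)$; if no $r_P$ were divisible by $m_o$ this sum would be too constrained ($r_P$-torsion contributions with $\gcd(r_P,m_o)=1$) to represent the nontrivial $m_o$-torsion class, again a contradiction. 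I would try the geometric argument first and fall back on this Riemann–Roch computation if the étale-cover bookkeeping becomes unwieldy.
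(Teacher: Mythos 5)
There is a genuine gap, and it sits exactly where the real content of the proposition lies. Since $m_oF_o=f^*(o)$ is Cartier, the index of $F_o$ at any point $P$ (the order of $F_o$ in the local class group) always \emph{divides} $m_o$; so the statement to be proved is that at some point this index \emph{equals} $m_o$, and its negation is that at every point the index is a \emph{proper divisor} of $m_o$. Your contradiction hypothesis conflates this with ``the index is coprime to $m_o$,'' i.e.\ with $F_o$ being Cartier everywhere. These coincide only when $m_o$ is prime. For composite $m_o$ (which must be handled: $m_o=4$ and $6$ occur in Theorem~\ref{main}, and Lemma~\ref{main-m6-b236} explicitly needs a single point of index $6$), the negation allows points where the index of $F_o$ is, say, $2$ or $3$ while $m_o=6$. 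At such points the cover $\pi$ of Construction~\ref{base-change} \emph{is} ramified, with stabilizer a proper nontrivial subgroup of $\muu_{m_o}$, so your conclusion ``$\pi$ is \'etale near $F_o$'' does not follow and the connectedness/Euler-characteristic contradiction never gets off the ground. In group-theoretic terms, the proposition is equivalent to saying that the $\muu_{m_o}$-action on $X'$ has a point with \emph{full} stabilizer, i.e.\ a fixed point on $F_o'$; a cyclic group can perfectly well act on a connected normal variety with all stabilizers proper, so ruling this out requires genuine geometric input. The paper supplies it via minimal centers of log canonical singularities, the Connectedness Lemma, the Lefschetz fixed point formula on a rational surface (Lemma~\ref{lemma-claim-fp}), and, in the weak (non-ample) case, a $\muu_{m_o}$-equivariant MMP with a descending induction through flips and divisorial contractions. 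None of this machinery, nor any substitute for it, appears in your proposal.

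Your fallback Riemann--Roch route has the same limitation. For prime $m$ the identity \eqref{eq-RR-su}, $\sum_P\Xi_{P,m}=-(m-1)/2\neq 0$, does force $\B(D)\neq\emptyset$ and hence produces a point of index divisible by $m$ --- this is essentially what Proposition~\ref{prop-Dprime} exploits, and it would salvage your argument when $m_o$ is prime. But for $m_o=6$ it only yields, applied to $D=3F_o$ and $D=2F_o$ separately, one point where $3F_o$ is not Cartier and one point where $2F_o$ is not Cartier; nothing in the numerics forces these to be the \emph{same} point, so you cannot conclude that any single point has index divisible by $6$. To repair the proof you would need to establish the fixed-point statement for the $\muu_{m_o}$-action directly, along the lines of the paper's argument.
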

\begin{proof}
Regard $f\colon X\to (Z,o)$ as a germ and apply 
Construction \ref{base-change}.
It is sufficient to show that 
$\muu_{m_o}$ has a fixed point on $F_o'$ (see Lemma \ref{lemma-Kawamata-index-divisors}).

First we consider the case of del Pezzo bundle, i.e., the case where $-K_X$ is ample.
Let $\gamma$ be the log canonical threshold of $(X',F_o')$
and let $W'\subset X'$ be a minimal center of log canonical 
singularities of $(X',\gamma F_o')$ (see \cite[\S 1]{Kawamata1997}).

Assume that $\dim W'\le 1$. 
Let $H$ be a general hyperplane section of $X$
passing through 
$\pi(W')$ and let $H':=\pi^*H$.
For $0<\varepsilon\ll 1$, the pair $(X',\, \gamma F_o'+\varepsilon H')$
is not LC along $\pi^{-1}\pi(W')$ and LC outside.
Therefore for some $0<\delta\ll \epsilon$ 
the pair $(X',\, (\gamma-\delta)F_o'+\varepsilon H')$
is not KLT along $\pi^{-1}\pi (W')$ and KLT outside.
Moreover, $W'$ is a minimal LC center for $(X',\, (\gamma-\delta)F_o'+\varepsilon H')$.
Recall that
any irreducible component of 
the intersection of two LC centers  
is also an LC center  \cite[Proposition 1.5]{Kawamata1997}.
Hence $W'$ is the only LC center in its 
neighborhood. Since the boundary 
$(\gamma-\delta)F_o'+\varepsilon H'$ is $\muu_{m_o}$-invariant,
all the $gW'$ for $g\in \muu_{m_o}$ are also
centers of log canonical singularities for 
the pair $(X',\, (\gamma-\delta)F_o'+\varepsilon H')$.
On the other hand, the locus $\muu_{m_o}W'$ 
of log canonical singularities
for the pair
$(X',\, (\gamma-\delta)F_o'+\varepsilon H')$ is connected,
see \cite[\S 5]{Shokurov-1992-e-ba}, \cite[17.4]{Utah}.
Hence $\muu_{m_o}W'$ is irreducible and so $W'$ is 
$\muu_{m_o}$-invariant. If $W'$ is a point, we are done.
Otherwise $W'$ is a smooth rational curve
\cite[Th. 1.6]{Kawamata1997},
\cite{Kawamata-1997-Adj}. 
But any cyclic group acting on $\PP^1$ has a fixed points.

Assume that $\dim W'=2$, 
that is, $W'=\down{\gamma F_o'}$ and the pair $(X',\gamma F_o')$ is PLT.
By the inversion of adjunction \cite[3.3]{Shokurov-1992-e-ba}, \cite[17.6]{Utah}
and Connectedness Lemma \cite[\S 5]{Shokurov-1992-e-ba}, \cite[17.4]{Utah}
the surface
$W'$ is irreducible, normal and has only KLT singularities.
Hence $W'$ is a KLT log del Pezzo surface.
In particular, $W'$ is rational.
Then the assertion follows by Lemma \ref{lemma-claim-fp}
below.

Now we consider the general case.
We apply $\muu_{m_o}$-equivariant MMP in the category 
$\muu_{m_o}$-threefolds (i.e., threefolds with terminal singularities 
and such that every $\muu_{m_o}$-invariant Weil divisor is $\QQ$-Cartier,
see e.g. \cite[0.3.14]{Mori-1988}). 
Let $X_1\to X'$ be a $\muu_{m_o}$-equivariant $\QQ$-factorialization.
Run $\muu_{m_o}$-equivariant MMP over $Z'$:
\[
X_1 \dashrightarrow X_2\dashrightarrow \cdots \dashrightarrow
X_N. 
\]
These maps induce a sequence maps 
\[
X_1/\muu_{m_o} \dashrightarrow X_2/\muu_{m_o}\dashrightarrow \cdots \dashrightarrow
X_N/\muu_{m_o}. 
\]
where each step is either $K$-negative divisorial contraction or a flip
(both are not neseccarily extremal).
Hence, on each step the quotient $X_i/\muu_{m_o}$ has only terminal singularities 
and the action of $\muu_{m_o}$ on $X_i$ is free in codimension two.
On the last step $X_N$ is either a del Pezzo bundle over $Z'$
with $\rho^{\muu_{m_o}}(X_N/Z')=1$ or a $\QQ$-conic bundle over a surface $S$ and
$S/Z'$ is a rational curve fibration.
In both cases $\muu_{m_o}$ has a fixed point on $X_N$.
We prove the existence of fixed point on $X_i$ by a descending induction on $i$.
So we assume that $X_{i+1}$ has a fixed point, say $P$.
If $\psi_i\colon X_{i} \dashrightarrow X_{i+1}$ is a flip, we may assume that
$P$ is contained in the flipped curve $C_{i+1}\subset X_{i+1}$. 
In this case $\muu_{m_o}$
acts on 
a connected closed subset of
the flipping curve $C_{i}\subset X_{i}$.
Since $C_{i}$ is a tree of rational curves, $\muu_{m_o}$
has a fixed point on $C_{i}$.
Similar argument works in the case where $\psi_i\colon X_{i} \to X_{i+1}$
is a contraction of a 
$\muu_{m_o}$-invariant
divisor to a curve.
Thus we may assume that $\psi_i\colon X_{i} \to X_{i+1}$
is a divisorial contraction that contracts a
$\muu_{m_o}$-invariant
divisor $E\subset X_i$
to $P$.
Let $\gamma$ be the log canonical threshold of $(X_i,E)$
and let $W_i\subset X_i$ be a minimal center of log canonical
singularities of $(X_i,\gamma E)$.
As in the del Pezzo bundle case above,
considering the action of $\muu_{m_o}$ on $W_i$ we find
a fixed point. This proves our proposition.
\end{proof}

\begin{lemma}
\label{lemma-claim-fp}
Let $S$ be a rational surface.
Then any action of a finite 
cyclic group on $S$ has a fixed point.
\end{lemma}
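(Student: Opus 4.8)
The plan is to reduce the statement to the case of a smooth projective surface and then apply the holomorphic Lefschetz fixed point formula to a generator of the cyclic group.

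First I would pass to a smooth model. For any normal surface the minimal resolution $\mu\colon\tilde S\to S$ is canonical, so the action of the cyclic group $G$ lifts to $\tilde S$; moreover $\tilde S$ is again a smooth projective rational surface, and the $\mu$-image of a $G$-fixed point of $\tilde S$ is a $G$-fixed point of $S$. Hence we may assume $S$ itself is smooth and projective. Next I would use cyclicity: writing $G=\langle g\rangle$, a point is fixed by all of $G$ as soon as it is fixed by $g$, so $S^G=S^g$ and it suffices to produce one fixed point of the single automorphism $g$.

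Now the key input is that for a smooth projective rational surface one has $H^0(S,\OOO_S)=\CC$, with $g$ acting trivially, while $H^1(S,\OOO_S)=H^2(S,\OOO_S)=0$ (these are birational invariants that vanish for $\PP^2$). Consequently the holomorphic Lefschetz number $L(g)=\sum_i(-1)^i\operatorname{tr}\bigl(g^*\mid H^i(S,\OOO_S)\bigr)$ equals $1$. Since $g$ has finite order its fixed locus $S^g$ is smooth, and by the Atiyah--Bott holomorphic Lefschetz fixed point formula $L(g)$ is a sum of local contributions indexed by the connected components of $S^g$; were $S^g$ empty this sum would vanish. As $L(g)=1\ne 0$, we conclude $S^g\ne\emptyset$, hence $S^G\ne\emptyset$.

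There is no real obstacle in this argument; the two things to keep in mind are that $S$ may a priori be singular --- dealt with by the canonical minimal resolution --- and that one must invoke cyclicity to pass to a single generator, which is exactly what lets one apply the fixed point formula with the simplest possible coefficient cohomology. (One could instead run a $G$-equivariant minimal model program, reducing $S$ to $\PP^2$, to a $G$-equivariant conic bundle over $\PP^1$, or to a del Pezzo surface with $\rho^G=1$, and lift fixed points through the intermediate birational contractions exactly as in the proof of Proposition~\ref{cor-point}; but the $G$-minimal del Pezzo case is then still most efficiently handled by the same Lefschetz computation.)
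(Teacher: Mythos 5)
Your proof is correct and follows essentially the same route as the paper: pass to a smooth model via the canonical (hence equivariant) resolution, then apply a Lefschetz fixed point formula to a generator of the cyclic group, using rationality to control the relevant cohomology. The only difference is that you invoke the holomorphic (Atiyah--Bott) Lefschetz formula on $H^{*}(S,\OOO_S)$, where $L(g)=1$ is immediately nonzero, whereas the paper uses the topological formula on $H^{*}(S,\CC)$ together with the vanishing of the odd Betti numbers; your variant is if anything the tighter one, since with singular cohomology one must still rule out $\operatorname{tr}\bigl(g^{*}\mid H^{2}(S,\CC)\bigr)=-2$.
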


\begin{proof}
Let $\muu_m$ be the cyclic group acting on $S$.
Replacing $S$ with its normalization and the minimal 
resolution, we may assume that $S$ is smooth.
Since $S$ is rational, $H^i(S,\CC)=0$ if $i$ is odd.
Then the assertion follows by the Lefschetz fixed point formula.
\end{proof}

\section{Preparations}
\label{sect-prep}
\begin{notation}
\label{not}
Let $f\colon X\to Z$ be a weak del Pezzo fibration.
Compactify $X$ and $Z$ and resolve $X$ only above the added points of $Z$.
Thus we may assume that both $X$ and $Z$ are projective.
Let $F_g$ be a general fiber and let $f^*(o)=m_oF_o$
be a special fiber of multiplicity $m_o$. 
Write $m_o=m\alpha$, where $m$ and $\alpha$ are positive integers
and put $D:=\alpha F_o$. Then $m_oF_o=mD=f^*(o)$.
\end{notation}

\begin{computation}
By a variant of J. Koll\'ar's Higher Direct Images Theorem
(see \cite[1-2-7]{KMM}, \cite{Nakayama1986}), one has that
$R^i f_*\OOO_X(K_X-jD)$ is torsion free for all $i$.
But its restriction to the general fiber $F_g$ is zero for $i \neq2$
because $-K_{F_g}$ is nef and big.
Hence $R^i f_*\OOO(K_X-jD)=0$ for $i\neq 2$.
Further, the 
Leray spectral sequence yields 
\[
H^q(X,K_X-jD)=H^{q-2}(Z,R^2 f_*\OOO(K_X-jD))=0
\]
for $q-2 \neq1$
and $j \gg0$ because $R^2 f_* \OOO(K_X-jD)$ is very negative.
By Serre duality 
\[
H^{3-q}(X,jD)\simeq H^q(X,K_X-jD)^{\vee} =0
\]
for $q \neq 3$ and $j \gg0$.

Finally, $H^{i}(X,jD)=0$ for all $i>0$, $j> j_0\gg 0$. We also have 
\[
H^0(X,j f^*(o)+lD)\simeq H^0(X,j f^*(o))
\]
for $l=0,\dots,m-1$.
Put $j_1:=\down{j_0/m}$ and
\[
\Theta_l:=\frac{1}{mj_1 }h^0(X,
j_1 f^*(o))-\frac{1}{mj_1 +l} h^0(X,j_1 f^*(o)+lD).
\]
Thus for $l=0,\dots,m-1$ we have 
\begin{equation}
\label{eq-RR-e1}
\Theta_l=\frac{l}{mj_1 (mj_1 +l)}h^0(X,j_1 f^*(o))=\frac{l(j_1 -p_a+1)}{mj_1 (mj_1 +l)},
\end{equation}
where $p_a$ is the genus of $Z$. On the other hand, by 
\eqref{eq-RR-D2=0} 
\begin{equation}
\label{eq-RR-e2}
\Theta_l=-\frac{1}{mj_1 +l}\sum_{P\in \B} c_P(lD)+\frac{l}{mj_1 (mj_1 +l)}\chi(\OOO_X).
\end{equation}
Comparing \eqref{eq-RR-e1} and \eqref{eq-RR-e2} we get
\begin{equation}
\label{eq-RR-e3}
m\sum_{P\in \B} c_P(lD)=-l,\qquad l=0,\dots,m-1.
\end{equation}
\end{computation}

\begin{computation}
Denote
\[
\begin{array}{lll}
\Delta_a&:=&\chi(\OOO_X(-K-aF_o))-\chi(\OOO_X(-K-(a+1)F_o)),
\\[9pt]
\delta_a&:=&\sum\limits_{P\in \B} c_P(-K-aF_o)- \sum\limits_{P\in \B} c_P(-K-(a+1)F_o).
\end{array}
\]
As above, for $a=0,\dots,m_o-2$, the following equality holds
\begin{multline*}
\Delta_a=
\frac{13}{12}K^2\cdot F_o
+\frac{1}{12}F_o\cdot c_2(X)+
\sum_{P\in \B} c_P(-K-aF_o)-
\\
-\sum_{P\in \B} c_P(-K-(a+1)F_o)
=
\frac{13}{12m_o}K^2\cdot F_g 
+\frac{1}{12m_o}F_g\cdot c_2(X)+\delta_a. 
\end{multline*}
Since $K^2\cdot F_g=K_{F_g}^2$ and 
$F_g\cdot c_2(X)=c_2(F_g)=12-K_{F_g}^2$, we have
\begin{equation}
\label{eq-Delta}
\Delta_a=\frac{K_{F_g}^2+1}{m_o}+\delta_a.
\end{equation}
\end{computation}

\begin{discussion}
\textbf{Some computations.}
Let $(X,P)$ be a cyclic quotient singularity of type
$\frac 1r (a,-a,1)$, let $D$ be a Weil divisor on $X$,
and let $m$ be a natural number. We have $D\sim q K_X$ for some $0\le q < r$. Denote 
\begin{equation}
\label{eq-RR-su-def}
\Xi_{P,m}:=\sum_{l=1}^{m-1} c_P(lD).
\end{equation}
We also will write $\Xi_P$ or $\Xi$ instead of $\Xi_{P,m}$ if no confusion is likely.
By definition
\begin{equation}
\label{eq-S1}
\Xi_{P,m}=\sum_{l=1}^{m-1}
\left(-
\ov{ql}\frac{r^2-1}{12r}+ \sum_{j=1}^{\ov{ql}-1}\frac{\ov{bj}(r-\ov{bj})}{2r}\right).
\end{equation}
We compute $\Xi$ in some special situation:
\end{discussion}

\begin{lemma}
\label{lemma-Sp}
Let $s:=\gcd(r,q)$.
Write
$r=sm$ and $q=sk$ for some $s,\, k\in \ZZ_{>0}$
\textup(so that $\gcd(m,k)=1$\textup).
Then 
\begin{equation}
\label{eq-sum-f}
\Xi_{P,m}=-\frac{m^2-1}{24m}r.
\end{equation}
\end{lemma}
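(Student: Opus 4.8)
The plan is to evaluate the sum $\Xi_{P,m}=\sum_{l=1}^{m-1}c_P(lD)$ directly from the formula \eqref{eq-S1}, using heavily the hypothesis $r=sm$, $q=sk$ with $\gcd(m,k)=1$. The key observation is that since $q=sk$ and $r=sm$, the residue $\ov{ql}$ for $l=1,\dots,m-1$ equals $s\cdot\ov{kl}_m$, where $\ov{\phantom{x}}_m$ denotes reduction mod $m$; and because $\gcd(k,m)=1$, as $l$ runs over $1,\dots,m-1$ the values $\ov{kl}_m$ run over a permutation of $1,\dots,m-1$. So the inner structure of \eqref{eq-S1} decouples: the first term $-\ov{ql}\frac{r^2-1}{12r}$ sums to $-\frac{r^2-1}{12r}\cdot s\sum_{l=1}^{m-1}l = -\frac{r^2-1}{12r}\cdot s\cdot\frac{m(m-1)}{2}$, which using $r=sm$ simplifies to $-\frac{(r^2-1)(m-1)}{24}$.

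**Handling the double sum.** The harder piece is $\sum_{l=1}^{m-1}\sum_{j=1}^{\ov{ql}-1}\frac{\ov{bj}(r-\ov{bj})}{2r}$. First I would rewrite it by interchanging the order of summation: it counts, for each $j\in\{1,\dots,r-1\}$, the weight $\frac{\ov{bj}(r-\ov{bj})}{2r}$ with multiplicity equal to $\#\{l: 1\le l\le m-1,\ \ov{ql}>j\}$. Since $\ov{ql}=s\,\ov{kl}_m$ takes each value in $\{s,2s,\dots,(m-1)s\}$ exactly once, the multiplicity of index $j$ is $\#\{t\in\{1,\dots,m-1\}: st>j\} = m-1-\lfloor j/s\rfloor$ for $j<(m-1)s$ and it vanishes for $j\ge (m-1)s$. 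An alternative and perhaps cleaner route is to use the known base case $c_P(-K)$ and $c_P(K)$ from \eqref{eq-RR-cPK-K} together with the telescoping identity \eqref{eq-RR-e3} specialized to this single-point situation — indeed, if $\B$ consisted of just this one point then \eqref{eq-RR-e3} would read $m\,\Xi_{P,m}=m\sum_{l=1}^{m-1}c_P(lD)$, but that requires the global geometric hypothesis, so instead I would look for a purely arithmetic identity. I expect the slickest proof uses the Dedekind-sum–type reciprocity hidden in $c_P$: the function $l\mapsto c_P(lD)$ is, up to the linear term, a lattice-point count whose sum over a full residue system is computable in closed form.

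**The main calculation.** Carrying out the interchange, the double sum becomes $\frac{1}{2r}\sum_{j=1}^{(m-1)s-1}\bigl(m-1-\lfloor j/s\rfloor\bigr)\ov{bj}(r-\ov{bj})$. Here I use that $\gcd(b,r)=1$, so $j\mapsto\ov{bj}$ permutes $\{1,\dots,r-1\}$; grouping the range $j=1,\dots,(m-1)s-1$ by residue blocks of length $s$ and exploiting $\sum_{j=1}^{r-1}\ov{bj}(r-\ov{bj})=\sum_{j=1}^{r-1}j(r-j)=\frac{r(r^2-1)}{6}$ together with the symmetry $\ov{b(r-j)}=r-\ov{bj}$, the weighted sum collapses. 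After combining with the linear term $-\frac{(r^2-1)(m-1)}{24}$ computed above, everything should reduce to $-\frac{m^2-1}{24m}r$; the appearance of $\frac{m^2-1}{24m}$ rather than something depending on $b$ or $k$ is the expected manifestation of the fact that all the $b$- and $k$-dependence cancels.

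**The main obstacle.** The real difficulty is bookkeeping in the interchange of summation and verifying that the $\ov{bj}$-dependence genuinely cancels: a priori the weighted sum $\sum_j(m-1-\lfloor j/s\rfloor)\ov{bj}(r-\ov{bj})$ over a truncated range $j<(m-1)s$ could depend on $b$, and showing it does not requires either a careful pairing $j\leftrightarrow r-j$ (noting $r-j$ lies outside the truncated range when $j$ lies inside, so the naive pairing fails and one must be more clever) or recognizing the whole expression as a Dedekind sum and invoking reciprocity. I anticipate spending most of the effort here; once the cancellation is established the final algebraic simplification to $-\frac{m^2-1}{24m}r$ is routine. A sanity check along the way: for $m=2$, $s=r/2$, $k=1$, the formula predicts $\Xi_{P,2}=c_P(D)=-\frac{r}{16}$, which can be verified by hand from \eqref{eq-RR-cP-def} with $q=r/2$, providing a useful consistency test for the general argument.
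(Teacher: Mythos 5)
Your setup is correct and is essentially the paper's: you reduce $\ov{ql}$ to the arithmetic progression $s,2s,\dots,(m-1)s$ using $\gcd(k,m)=1$, and your evaluation of the linear term as $-\frac{(r^2-1)(m-1)}{24}$ agrees with the paper's. The genuine gap is that the one step carrying all the content --- the cancellation of the $b$-dependence in the double sum --- is asserted (``the weighted sum collapses'', ``everything should reduce to'') rather than carried out, and you explicitly record that you do not see how to make the pairing $j\leftrightarrow r-j$ work on the truncated range. As written this is a plan, not a proof.

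The gap closes exactly along the lines you suspect, and the fix is short. Put $f(j):=\ov{bj}(r-\ov{bj})$ and, in your interchanged form, $T=\sum_{j=1}^{r-1}w(j)f(j)$ with $w(j)=m-1-\lfloor j/s\rfloor$ (so $w(j)=0$ for $j\ge (m-1)s$). The point is not to pair the \emph{terms} $j$ and $r-j$ (you are right that one of them leaves the truncated range) but to pair the \emph{weights}: since $f(j)=f(r-j)$, one has $2T=\sum_{j=1}^{r-1}\bigl(w(j)+w(r-j)\bigr)f(j)$, and $w(j)+w(r-j)=m-1$ for $j\not\equiv 0\pmod s$, while $w(ts)+w(r-ts)=m-2$ for $t=1,\dots,m-1$. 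Hence $2T=(m-1)\sum_{j=1}^{r-1}f(j)-\sum_{t=1}^{m-1}f(ts)$. Both remaining sums are $b$-free: $\sum_{j=1}^{r-1}f(j)=\sum_{j=1}^{r-1}j(r-j)=\frac{r(r^2-1)}{6}$ because $j\mapsto\ov{bj}$ permutes $\{1,\dots,r-1\}$, and $\sum_{t=1}^{m-1}f(ts)=\sum_{t=1}^{m-1}st(r-st)$ because $\ov{bst}=s\,\ov{bt}$ (residue taken mod $m$) and $t\mapsto \ov{bt}$ permutes $\{1,\dots,m-1\}$. No Dedekind reciprocity is needed. This identity is precisely the paper's computation, which phrases the symmetrization as replacing $\sum_{j=1}^{sl-1}$ by $\sum_{j=r-sl+1}^{r-1}$ and averaging over $l\leftrightarrow m-l$; substituting the two closed forms and simplifying with $sm=r$ gives $-\frac{m^2-1}{24m}r$. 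Your $m=2$ sanity check is correct and consistent with this.
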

\begin{proof}
By our assumption $\gcd(m,k)=1$ the parameter $\ov{ql}=\ov{skl}$
runs through all the values $sl$, $l=1,\dots,m-1$. Hence,
\begin{equation*}
\Xi=-\sum_{l=1}^{m-1}
sl\frac{r^2-1}{12r}+\sum_{l=1}^{m-1} \sum_{j=1}^{sl-1}\frac{\ov{bj}(r-\ov{bj})}{2r}.
\end{equation*}
Since $\ov{bj}(r-\ov{bj})=\ov{bj'}(r-\ov{bj'})$ for $j+j'=r$, we have 
\[
\sum_{j=1}^{sl-1}\frac{\ov{bj}(r-\ov{bj})}{2r}
=
\sum_{j=r-sl+1}^{r-1}\frac{\ov{bj}(r-\ov{bj})}{2r}.
\]
Therefore,
\begin{multline*}
\Xi=-
\frac{m(m-1)s}{2}
\frac{r^2-1}{12r}+
\frac{m-1}{2}
\sum_{j=1}^{r-1}\frac{\ov{bj}(r-\ov{bj})}{2r}-
\frac12 \sum_{l=1}^{m-1}\frac{\ov{bsl}(r-\ov{bsl})}{2r}=
\\
-\frac{(m-1)r}{2}
\frac{r^2-1}{12r}+
\frac{m-1}{2}
\sum_{j=1}^{r-1}
\frac{\ov{bj}(r-\ov{bj})}{2r}
-\frac12 \sum_{l=1}^{m-1}\frac{sl(r-sl)}{2r}
=\frac{m-1}{2}c_P(rK)\\
-\frac{s}{4} \sum_{l=1}^{m-1}l
+\frac{s^2}{4r} \sum_{l=1}^{m-1} l^2=
-\frac{sm(m-1)}{8}+\frac{s^2}{24r}(m-1)m(2m-1)=
\\
=
-\frac{m-1}{8}
\left(r- \frac{s}{3}(2m-1)\right)=
-\frac{m-1}{24}
\left(r+\frac{r}{m}\right).
\end{multline*}
(We used $sm=r$ and $c_P(rK)=0$.) This proves our lemma.
\end{proof}

\begin{lemma}
\label{lemma-su-div}
If $m=m_1m_2$, where $m_1D$ is Cartier, then
\[
\Xi_{P,m}=m_2\Xi_{P,m_1}.
\]
\end{lemma}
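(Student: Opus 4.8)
The plan is to exploit that, near $P$, the number $c_P(lD)$ depends on $l$ only through the residue $\ov{ql}\bmod r$, so the function $l\mapsto c_P(lD)$ is periodic with a small period that divides $m_1$; once this is set up the lemma becomes a one-line counting argument.

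First I would fix $s:=\gcd(r,q)$ and set $r':=r/s$, and observe that $r'$ is exactly the order of the class of $D$ in the local class group at $P$. Indeed, by Lemma \ref{lemma-Kawamata-index-divisors} every Weil divisor near $P$ is linearly equivalent to a multiple of $K_X$, and $K_X$ has order $r$ there (this is the index of the cyclic quotient singularity $\frac1r(a,-a,1)$); since $D\sim qK_X$, the smallest $e>0$ with $eD$ Cartier is $r'=r/\gcd(r,q)$. Consequently the hypothesis ``$m_1D$ is Cartier near $P$'' is equivalent to $r'\mid m_1$, and in particular $r'\mid m_1\mid m$.

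Next I would record two elementary properties of $l\mapsto c_P(lD)$. First, it vanishes at $l=0$: since $\ov{q\cdot 0}=0$, both terms in \eqref{eq-RR-cP-def} are empty, so $c_P(0\cdot D)=0$. Second, it is $r'$-periodic in $l$: if $l\equiv l'\pmod{r'}$, then writing $q=sq'$ one has $q(l-l')$ divisible by $qr'=q'r$, hence by $r$, so $\ov{ql}=\ov{ql'}$ and therefore $c_P(lD)=c_P(l'D)$ by \eqref{eq-RR-cP-def} (recall $q_P$ for the divisor $lD$ is $\ov{ql}$, and $b=b_P$ is fixed).

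With these in hand the computation is immediate. Using the vanishing at $0$, $\Xi_{P,m}=\sum_{l=1}^{m-1}c_P(lD)=\sum_{l=0}^{m-1}c_P(lD)$, and since $r'\mid m$ one may split $\{0,\dots,m-1\}$ into $m/r'$ consecutive blocks of length $r'$; by $r'$-periodicity each block contributes $\sigma:=\sum_{l=0}^{r'-1}c_P(lD)$, so $\Xi_{P,m}=(m/r')\,\sigma$. The same reasoning for $m_1$ (using $r'\mid m_1$) gives $\Xi_{P,m_1}=(m_1/r')\,\sigma$. Dividing, $\Xi_{P,m}=\frac{m/r'}{m_1/r'}\,\Xi_{P,m_1}=\frac{m}{m_1}\,\Xi_{P,m_1}=m_2\,\Xi_{P,m_1}$; the degenerate case $r'=1$ (i.e. $D$ Cartier) just gives $\sigma=0$ and both sides vanish. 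There is no serious obstacle: the only point requiring care is identifying the period $r'=r/\gcd(r,q)$ with the local order of $D$, hence with the divisibility $r'\mid m_1$ coming from the Cartier hypothesis; after that, the $r'$-periodicity of $c_P(\cdot\, D)$ and the normalization $c_P(0)=0$ close the argument.
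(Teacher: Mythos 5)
Your proof is correct and follows essentially the same route as the paper, whose entire argument is the one-line observation that $c_P(tD)$ is periodic in $t$ (so that the sum defining $\Xi_{P,m}$ splits into $m_2$ blocks each equal to $\Xi_{P,m_1}$, using $c_P(0)=0$). You merely make the periodicity precise by identifying the exact period $r'=r/\gcd(r,q)$ as the local order of $D$, which is a harmless refinement of the same idea.
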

\begin{proof}
Follows by \eqref{eq-RR-su-def} because $c_P(tD)$ 
is $r$-periodic. 
\end{proof}

\section{Proof of Theorem \ref{main}}
\label{sect-proof}
Notation as in \ref{not}.
Near each singular point $P\in X$ of index $r_P$ 
we write 
\[
D\sim q_PK_X.
\]
Then $mq_PK_X\sim mD$ is Cartier near $P$. Hence,
\begin{equation}
\label{eq-RR-div1}
mq_P \equiv 0 \mod r_P.
\end{equation}
From \eqref{eq-RR-e3} we have 
\begin{equation}
\label{eq-RR-su}
\sum_{P\in \B} \Xi_{P,m}=-\sum_{l=1}^{m-1} \frac lm
=-\frac{m-1}{2}.
\end{equation}

\begin{proposition}
\label{prop-Dprime}
Notation as above. If $m$ is prime, then
we have one of the following possibilities:
\begin{enumerate}
\renewcommand
\labelenumi{(\arabic{section}.\arabic{equation}.\arabic{enumi})}
\renewcommand
\theenumi{(\arabic{section}.\arabic{equation}.\arabic{enumi})}
\item 
\label{main-m2-b8}
\quad $m= 2$, $\B(D)=\basket{8}$, 
\item 
\label{main-m2-b26}
\quad $m= 2$, $\B(D)=\basket{2,6}$, 
\item 
\label{main-m2-b44}
\quad $m= 2$, $\B(D)=\basket{4,4}$,
\item 
\label{main-m2-b224}
\quad $m= 2$, $\B(D)=\basket{2, 2, 4}$,
\item 
\label{main-m2-b2222}
\quad $m= 2$, $\B(D)=\basket{2, 2, 2, 2}$,
\item 
\label{main-m3-b9}
\quad $m= 3$, $\B(D)=\basket{9}$,
\item 
\label{main-m3-b333}
\quad $m= 3$, $\B(D)=\basket{3, 3, 3}$,
\item 
\label{main-m3-b36}
\quad $m= 3$, $\B(D)=\basket{3,6}$, 

\item 
\label{main-m5-b55}
\quad $m= 5$, $\B(D)=\basket{5,5}$,

\item
\label{m5-b10}
\quad $m=5$, $\B(D)=\basket{10}$, 
\item
\label{m11-b11}
\quad $m=11$, $\B(D)=\basket{11}$.

\end{enumerate}. 
\end{proposition}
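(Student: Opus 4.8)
The plan is to evaluate the identity \eqref{eq-RR-su} one basket point at a time; the whole point is that primality of $m$ forces a very rigid local picture for $D$ at each point of $\B$, so that Lemma \ref{lemma-Sp} applies uniformly and \eqref{eq-RR-su} collapses to a single linear equation in $\sum r_P$.

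First I would fix $P\in\B$ of index $r_P$, write $D\sim q_PK_X$ near $P$ with $0\le q_P<r_P$, and record that \eqref{eq-RR-div1} gives $mq_P\equiv 0\mod r_P$. Put $s_P:=\gcd(r_P,q_P)$ and write $r_P=s_Pn_P$, $q_P=s_Pk_P$ with $\gcd(n_P,k_P)=1$; the congruence then reads $n_P\mid mk_P$, hence $n_P\mid m$, and since $m$ is prime we get $n_P\in\{1,m\}$. If $n_P=1$ then $q_P\equiv 0$, i.e.\ $D$ is Cartier at $P$, so each $c_P(lD)$ vanishes by \eqref{eq-RR-cP-def} and $\Xi_{P,m}=0$. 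If $n_P=m$ then $m\mid r_P$, the hypotheses of Lemma \ref{lemma-Sp} hold with its ``$m$'' equal to our prime $m$ (note $r_P/s_P=n_P=m$), and it yields $\Xi_{P,m}=-\tfrac{m^2-1}{24m}\,r_P$. Thus $\B(D)=\{P\in\B:\ m\mid r_P,\ q_P\ne0\}$, and each $r_P$ with $P\in\B(D)$ is a positive multiple of $m$.

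Next I would feed these values into \eqref{eq-RR-su}. Only the points of $\B(D)$ contribute, so
\[
-\frac{m^2-1}{24m}\sum_{P\in\B(D)}r_P=-\frac{m-1}{2},
\qquad\text{hence}\qquad
\sum_{P\in\B(D)}r_P=\frac{12m}{m+1}.
\]
Since $\gcd(m,m+1)=1$, the right-hand side is an integer only if $m+1\mid12$; together with $m$ prime this leaves exactly $m\in\{2,3,5,11\}$, with $\sum r_P$ equal to $8,9,10,11$ respectively (in particular $\B(D)\ne\varnothing$). Finally I would list the multisets of positive multiples of $m$ summing to $12m/(m+1)$: for $m=2$ the partitions of $8$ into even parts give $\basket8$, $\basket{2,6}$, $\basket{4,4}$, $\basket{2,2,4}$, $\basket{2,2,2,2}$; for $m=3$ the partitions of $9$ into multiples of $3$ give $\basket9$, $\basket{3,6}$, $\basket{3,3,3}$; for $m=5$ one gets $\basket{10}$ and $\basket{5,5}$; for $m=11$ only $\basket{11}$. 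This is precisely the list (i)--(xi).

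I do not expect a real obstacle here: after the local case analysis the argument is a finite arithmetic check. The only subtlety worth stating explicitly is that the integer ``$m$'' appearing in Lemma \ref{lemma-Sp} must be the prime $m$ itself and not some proper divisor of it — which is exactly what primality of $m$ guarantees, since it forces $n_P\in\{1,m\}$. Were $m$ composite, different basket points could carry different values $n_P\mid m$, each contributing $-\tfrac{n_P^2-1}{24n_P}r_P$, and \eqref{eq-RR-su} would no longer reduce to one linear equation in $\sum r_P$.
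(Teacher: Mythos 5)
Your proposal is correct and follows essentially the same route as the paper: reduce via \eqref{eq-RR-div1} and primality of $m$ to the hypotheses of Lemma \ref{lemma-Sp} at each point of $\B(D)$, sum using \eqref{eq-RR-su} to get $(m+1)\sum r_P=12m$, and enumerate partitions into multiples of $m$. Your explicit verification that $n_P:=r_P/\gcd(r_P,q_P)\in\{1,m\}$ is a slightly more careful rendering of the step the paper dispatches in a parenthetical, but the argument is the same.
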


\begin{proof}
By \eqref{eq-RR-div1} we have $mq_P\equiv 0 \mod r_P$ and 
$r_P\equiv 0\mod m$ for all $P\in \B(D)$
(otherwise $q_P\equiv 0 \mod r_P$ and $P\notin \B(D)$).
Put $s_P:=r_P/m$. Then $q_P=s_Pk_P$ for some $k_P\in \ZZ_{>0}$.
Since $\gcd(k_P,q_P)=1$,
the assumption 
of Lemma \xref{lemma-Sp} holds for each point $P\in \B(D)$.
Combining \eqref{eq-sum-f} with \eqref{eq-RR-su} we obtain
\[
(m+1)\sum_{P\in \B} r_P=12m.
\]
Hence, $m\in \{2,\, 3,\, 5,\, 11\}$.
Using the fact 
$r_P\equiv 0\mod m$ we get the statement.
\end{proof}

\begin{proposition}
\label{prop-Dprime-1}
Cases \xref{m5-b10} and \xref{m11-b11} do not occur.
In particular,
the assertion of Theorem \xref{main} holds if $m_o$ is prime.
\end{proposition}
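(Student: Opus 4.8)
The plan is to exploit the relations \eqref{eq-RR-e3} for \emph{each} $l=1,\dots,m-1$ individually, not merely their sum \eqref{eq-RR-su}: the latter holds automatically by Lemma \xref{lemma-Sp} (which is exactly why Proposition \xref{prop-Dprime} cannot exclude \xref{m5-b10} and \xref{m11-b11}), so all the usable information sits in the individual equations. In both cases $\B(D)$ consists of a single point $P$; put $r:=r_P$, let $\frac1r(1,-1,b)$ be the type of $(X,P)$, and let $0<q<r$ satisfy $D\sim qK_X$ near $P$, so $mq\equiv0\bmod r$. As in Lemma \xref{lemma-Sp}, $s:=\gcd(r,q)$ gives $r=sm$, $q=sk$, $\gcd(k,m)=1$. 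Writing $c^{(t)}:=-\ov t\,\frac{r^2-1}{12r}+\sum_{j=1}^{\ov t-1}\frac{\ov{bj}(r-\ov{bj})}{2r}$ (residues in $\{0,\dots,r-1\}$, $c^{(0)}=0$), we have $c_P(lD)=c^{(\ov{lq})}$ by \eqref{eq-RR-cP-def}, so the system to solve is
\[
c^{(\ov{lq})}=-\tfrac{l}{m},\qquad l=1,\dots,m-1.
\]
As $l$ runs over $1,\dots,m-1$ the residue $\ov{lq}$ runs over $s,2s,\dots,(m-1)s$, each exactly once; hence the \emph{multiset} $\{c^{(s)},c^{(2s)},\dots,c^{((m-1)s)}\}$ must be $\{-\tfrac1m,\dots,-\tfrac{m-1}m\}$, and it depends on $b$ alone, whereas the matching of the two lists is controlled by $q$. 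Note also $c^{(t)}$ is invariant under $b\mapsto r-b$.

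For case \xref{m5-b10} ($r=10$, $m=5$, $s=2$, so $\ov b\in\{1,3,7,9\}$, effectively $\ov b\in\{1,3\}$) I would begin with $c^{(2)}=\bigl(\ov b(10-\ov b)-33\bigr)/20$, which equals $-6/5$ for $\ov b\in\{1,9\}$ and $-3/5$ for $\ov b\in\{3,7\}$. Since $c^{(2)}$ must be one of $-1/5,\dots,-4/5$, the first value is impossible and the second forces $\ov b\in\{3,7\}$ and $q=4$. Then the $l=1$ equation reads $c^{(4)}=-1/5$, whereas a one-line computation gives $c^{(4)}=-3/5$ for $\ov b\in\{3,7\}$ --- a contradiction.

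For case \xref{m11-b11} ($r=m=11$, $s=1$) the multiset $\{c^{(1)},\dots,c^{(10)}\}$ must equal $\{-1/11,\dots,-10/11\}$. Always $c^{(1)}=-10/11$, and for $c^{(2)}=\bigl(\ov b(11-\ov b)-40\bigr)/22$ to lie in the target set one needs $\ov b(11-\ov b)\in\{24,28,30\}$, i.e.\ $b\not\equiv\pm1,\pm2\bmod 11$; using $b\mapsto r-b$ this leaves $b\in\{3,4,5\}$. For $b=4$ and $b=5$ I would check directly that the list $\{c^{(1)},\dots,c^{(10)}\}$ has a repeated entry (and, for $b=5$, contains $0$), hence is not $\{-1/11,\dots,-10/11\}$. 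For $b=3$ the ten values turn out to be precisely $\{-1/11,\dots,-10/11\}$ --- so the multiset test is inconclusive --- but $c^{(j)}=-1/11$ holds only for $j=7$, so the $l=1$ equation forces $q=7$; then the $l=2$ equation fails, $c^{(\ov{14})}=c^{(3)}=-3/11\neq-2/11$. Thus \xref{m5-b10} and \xref{m11-b11} cannot occur.

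The single genuinely non-mechanical point --- and the step I expect to be the obstacle --- is this coincidence at $m=11$, $b=3$: the entire multiset of local Riemann--Roch contributions (and, as it happens, its power sums too) matches the target, so one is forced down to the finer data fixing $q$ before the contradiction surfaces; everything else is a bounded verification. Finally, for the ``in particular'': when $m_o$ is prime we take $m=m_o$, $\alpha=1$, $D=F_o$, and Proposition \xref{prop-Dprime} together with the exclusion just proved leaves only the baskets \xref{main-m2-b8}--\xref{main-m5-b55}, which are exactly those occurring in Theorem \xref{main}(i), (ii), (iv); in particular $m_o\le5$. The remaining entries of Table \xref{table} for these baskets --- the $b_i$, the $q_i$ and the admissible $K_{F_g}^2$ --- then follow from \eqref{eq-RR-e3}, \eqref{eq-Delta} and $1\le K_{F_g}^2\le9$, just as in the general analysis below of which the prime case is a special instance.
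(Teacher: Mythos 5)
Your proof is correct --- I checked the computations, including the key coincidence that for $r=11$, $b=3$ the multiset $\{c^{(1)},\dots,c^{(10)}\}$ really is $\{-1/11,\dots,-10/11\}$ --- and it rests on the same basic observation as the paper's argument: Proposition \xref{prop-Dprime} only used the summed relation \eqref{eq-RR-su}, so the two surviving cases must be excluded using the individual relations \eqref{eq-RR-e3}. For case \xref{m5-b10} your argument is essentially identical to the paper's (evaluate at the $l$ with $\ov{lq}=2$ to force $b\in\{3,7\}$, $l=3$, $q=4$, then contradict the $l=1$ equation). For case \xref{m11-b11}, however, the paper takes a shortcut that avoids your enumeration of $b$ and the awkward $b=3$ coincidence altogether: choosing $l$ with $ql\equiv 1\bmod 11$ gives $c_P(lD)=c_P(K)=-10/11$ by \eqref{eq-RR-cPK-K}, independently of $b$, whence $l=10$ and so $q=10$; then the $l=1$ equation reads $-1/11=c_P(D)=c_P(-K)=10/11-b(11-b)/22$, i.e.\ $b(11-b)=22$, which forces $11\mid b$ --- a contradiction in two lines. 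The trade-off is that your multiset criterion is more systematic and shows exactly how much information each of the ten relations carries, while the paper's choice of the two special exponents $\pm 1$ (where the closed forms \eqref{eq-RR-cPK-K} apply) eliminates all case analysis. The handling of the ``in particular'' clause is the same in both.
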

\begin{proof}

Consider the case \ref{m11-b11}. Since $\gcd(q,m)=1$,
there is $0<l<r=m$ such that $ql\equiv 1 \mod m$.
Then by \eqref{eq-RR-e3} and \eqref{eq-RR-cPK-K} we have 
\[
-\frac l{11}=c_P(lD)=c_P(K)=-\frac {r^2-1}{12r}=-\frac {10}{11},
\]
so $l=q=10$. Then again by \eqref{eq-RR-e3} and \eqref{eq-RR-cPK-K}
\[
-\frac 1{11} =c_P(D)=c_P(-K)=\frac {r^2-1}{12r}-\frac{b(r-b)}{2r}=
\frac {10}{11}-\frac{b(11-b)}{22}.
\]
Hence, $b(11-b)=22$ and $b$ cannot be coprime to $11$, a contradiction.

Consider the case \ref{m5-b10}.
Since $mq=5q\equiv 0\mod r=10$, $q$ is even.
There is $0<l<5$ such that $ql\equiv 2 \mod r$.
Then by \eqref{eq-RR-e3} we have 
\[
-\frac l{5}=c_P(lD)=c_P(2K)=-\frac {2(r^2-1)}{12r}+\frac{b(r-b)}{2r}
=\frac {b(10-b)-33}{20}.
\]
Thus $b(10-b)+4l=33$, $b\in \{3, \, 7\}$, $l=3$, and $q=4$.
Again by \eqref{eq-RR-e3} 
\begin{equation*}
-\frac 15=c_P(D)=-\frac {4(r^2-1)}{12r}+
\sum_{j=1}^3\frac{\ov{bj}(r-\ov{bj})}{2r}=
-\frac {33}{10}+
\sum_{j=1}^3\frac{\ov{3j}(10-\ov{3j})}{20}=
-\frac {3}{5},
\end{equation*}
a contradiction. This proves our lemma.
\end{proof}

\begin{corollary}
\label{cor-p}
For every prime divisor $d$ of $m_o$ 
we have $d\in \{2,\, 3,\, 5\}$.
\end{corollary}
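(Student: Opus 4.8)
The plan is to deduce the corollary from Propositions \ref{prop-Dprime} and \ref{prop-Dprime-1} by applying them not to $m_o$ itself but to one prime factor at a time. Recall that in Notation \ref{not} the factorization $m_o=m\alpha$ is arbitrary. So, given a prime divisor $d$ of $m_o$, I would run the whole setup of \ref{not} and of the two Computation blocks that precede Proposition \ref{prop-Dprime} with the choice $m:=d$, $\alpha:=m_o/d$, and $D:=\alpha F_o=(m_o/d)F_o$. Then $mD=dD=f^*(o)$, and since $D$ is a positive rational multiple of a fiber we have $D^2\equiv 0$, so that \eqref{eq-RR-D2=0} applies; the cohomology vanishing for $j\gg 0$ used in those computations also goes through unchanged. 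Note moreover that $D$ is $\QQ$-Cartier, because $m_oF_o=f^*(o)$ is Cartier.

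With this choice $m=d$ is prime, so Proposition \ref{prop-Dprime} applies and forces $d=m\in\{2,3,5,11\}$. To finish I would invoke Proposition \ref{prop-Dprime-1}: among the entries of the list in Proposition \ref{prop-Dprime}, case \ref{m11-b11} is the only one with $m=11$, and it is shown there not to occur. Hence $d\in\{2,3,5\}$, as claimed.

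I do not expect a genuine obstacle: the corollary is a bookkeeping consequence of the two preceding propositions. The only point requiring care is that Propositions \ref{prop-Dprime} and \ref{prop-Dprime-1} are statements about the auxiliary pair $(m,D)$ rather than about $(m_o,F_o)$, so the passage to a single prime factor must be spelled out explicitly; once the factorization $m_o=m\alpha$ is taken with $m=d$, nothing further is needed.
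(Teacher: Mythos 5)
Your proof is correct and is exactly the paper's argument: the paper's one-line proof is ``Apply Propositions \ref{prop-Dprime} and \ref{prop-Dprime-1} with $D=\frac{m_o}{d}F_o$,'' which is precisely your choice $m=d$, $\alpha=m_o/d$ in Notation \ref{not}. You merely spell out the bookkeeping (that $m=11$ occurs only in case \ref{m11-b11}, which Proposition \ref{prop-Dprime-1} excludes) that the paper leaves implicit.
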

\begin{proof}
Apply Propositions \ref{prop-Dprime} and \ref{prop-Dprime-1} with $D=\frac{m_o}{d}F_o$.
\end{proof}

Let $P_i$ be points of $\B(F_o)$.
Let $P=P_1$ be a point in $\B(F_o)$
whose index $r_{P_1}$ is divisible by $m_o$ (see Proposition \ref{cor-point}).
For short, below we will write $r_i$, $b_i$, $q_i$, etc
instead of $r_{P_i}$, $b_{P_i}$, $q_{P_i}$, respectively.

\begin{corollary}
$m_o$ is not divisible by $m \in \{16, 27, 25,
10,15, 12, 18\}$.
\end{corollary}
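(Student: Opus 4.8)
The plan is to rule out divisibility of $m_o$ by each element of the list $\{16, 27, 25, 10, 15, 12, 18\}$ by combining Corollary \ref{cor-p} with divisibility properties of $m$ forced by the Riemann--Roch relations. First I would dispose of $25$, $27$, and $16$ immediately: if $m_o$ were divisible by $25$, $27$, or $16$, then some power $d^2$ of a prime $d\in\{5,3,2\}$ divides $m_o$, so $d^2 \mid m_o$ and we may take $D = \frac{m_o}{d^2}F_o$, reducing to the case $m = d^2$. But $m=d^2$ is not prime, and more to the point, applying Proposition \ref{prop-Dprime} is not directly available; instead I would observe that by Corollary \ref{cor-p} every prime divisor of $m_o$ lies in $\{2,3,5\}$, so $16, 27, 25$ are handled only if we show $m_o$ is squarefree in the appropriate primes. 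The cleaner route: for $d^2 \mid m_o$ with $d$ prime, set $D' = \frac{m_o}{d}F_o$, so $mD' = dD'$ has $m=d$ prime and $d \mid$ (index of $D'$ somewhere); then Proposition \ref{prop-Dprime} with $m=d$ forces $\B(D')$ to be one of the listed baskets, each of which has $r_P$ exactly divisible by $d$ (not $d^2$) — inspection of cases \ref{main-m2-b8}--\ref{main-m5-b55} shows no $r_P$ is divisible by $4$, $9$, or $25$. Since $D' = \frac{m_o}{d}F_o$ and $F_o$ has index divisible by $d^2$ at some point $P$ (because $d^2 \mid m_o$ and Proposition \ref{cor-point} applied with the subcover of order $d^2$), we get a contradiction with $r_P$ being exactly divisible by $d$ in $\B(D')$.

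Next I would handle the composite values $10 = 2\cdot 5$, $15 = 3\cdot 5$, $12 = 4\cdot 3$, $18 = 2\cdot 9$. For $12$ and $18$ the square-prime argument above already applies ($4 \mid 12$, $9 \mid 18$), so those reduce to the previous paragraph. For $10$ and $15$: suppose $10 \mid m_o$. Take $D = \frac{m_o}{10}F_o \cdot$(adjust so that $m = 10$); more precisely, since $2 \mid m_o$ and $5 \mid m_o$, consider $D_2 = \frac{m_o}{2}F_o$ and $D_5 = \frac{m_o}{5}F_o$. By Proposition \ref{prop-Dprime} applied to each, $\B(D_2)$ and $\B(D_5)$ are on the respective lists. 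By Proposition \ref{cor-point} there is a point $P$ where the index of $F_o$ is divisible by $m_o$, hence by $10$. At such $P$, $D_2 = \frac{m_o}{2}F_o$ has index divisible by $5$ and $D_5$ has index divisible by $2$, so $P \in \B(D_2) \cap \B(D_5)$ with $r_P$ divisible by $10$. Scanning the $m=2$ baskets, the only $r_P$ divisible by $5$ would need $r_P \in \{10, 20, \dots\}$, but the $m=2$ list has $r_P \in \{2,4,6,8\}$ only — contradiction. The same scan kills $15$: the $m=3$ baskets have $r_P \in \{3,6,9\}$, none divisible by $5$.

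The main obstacle I anticipate is bookkeeping the relation between the basket of $F_o$ and the basket of $D = \alpha F_o$ for various $\alpha$, specifically making precise the claim that ``$P \in \B(F_o)$ with $r_P$ divisible by $m_o$'' implies ``$P \in \B(\frac{m_o}{d}F_o)$ with $r_P$ divisible by $d$.'' This requires noting that if $r_P \mid m_o$-divisibility holds, then $\frac{m_o}{d}F_o \sim \frac{m_o}{d}q_P^{F_o} K_X$ near $P$, and the order of this divisor class in the local class group $\ZZ/r_P$ is $r_P/\gcd(r_P, \frac{m_o}{d}q_P^{F_o})$; one must check this is divisible by $d$, which follows because $r_P$ is divisible by $m_o$ hence by $d^2$ in the square cases and by $d$ in the remaining cases, together with $\gcd(q_P^{F_o}, r_P)$ being controlled by $F_o = \frac{1}{m_o}f^*(o)$ being the reduced fiber. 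Once this dictionary is set up cleanly, each of the seven values falls by a short case inspection of the finite lists in Proposition \ref{prop-Dprime}, so no genuinely hard computation remains — the work is entirely in the reduction lemma.
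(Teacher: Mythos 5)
Your overall strategy --- reduce to a prime $d\in\{2,3,5\}$ dividing $m_o$ via $D=\frac{m_o}{d}F_o$, invoke the finite lists of Propositions \xref{prop-Dprime} and \xref{prop-Dprime-1}, and derive a contradiction from the point $P_1$ of large index supplied by Proposition \xref{cor-point} --- is exactly the paper's, and your treatment of $10$ and $15$ in the second paragraph is correct. However, the argument you give for $16$, $27$, $12$ and $18$ rests on a false claim: you assert that in the baskets of Proposition \xref{prop-Dprime} ``no $r_P$ is divisible by $4$, $9$, or $25$,'' i.e.\ that every index is exactly divisible by $d$. This is wrong for $d=2$ and $d=3$: cases \xref{main-m2-b8}, \xref{main-m2-b44} and \xref{main-m2-b224} contain the indices $8$ and $4$, both divisible by $4$, and case \xref{main-m3-b9} contains the index $9$. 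So the criterion ``$r_{P_1}$ is divisible by $d^2$'' produces no contradiction for $d=2,3$, and four of your seven values are not ruled out as written (only $25$ survives this route, since the $m=5$ basket is $\basket{5,5}$).

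The repair is immediate and is what the paper does. Proposition \xref{cor-point} gives a point $P_1$ at which the index of $F_o$ --- and hence, by Lemma \xref{lemma-Kawamata-index-divisors}, the index $r_{P_1}$ of the point --- is divisible by $m_o$, so $r_{P_1}\ge m\ge 10$ for every $m$ in the list. On the other hand $P_1\in\B\bigl(\frac{m_o}{d}F_o\bigr)$ (your third paragraph correctly verifies this non-Cartier statement), and every index occurring in the baskets surviving Propositions \xref{prop-Dprime} and \xref{prop-Dprime-1} is at most $9$. The single inequality $r_{P_1}\le 9<10\le m$ disposes of all seven values uniformly, with no need to separate square and non-square cases; your weaker $d^2$-divisibility test should be replaced by this comparison of $r_{P_1}$ with the full composite $m$.
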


\begin{proof}
Let $d=2$, $3$ or $5$ be a prime divisor of $m_o$ 
and let $D=\frac {m_o}{d}F_o$.
Then $dD=f^*(o)$ and $D$ 
is not Cartier at $P_1$.
In this case, by 
Propositions \ref{prop-Dprime} and \ref{prop-Dprime-1} the index of $(X,P_1)$
is at most $9$, a contradiction.
\end{proof}

\begin{corollary}
If $m_o$ is not prime, then $m_o \in \{4,\, 6,\, 8,\, 9\}$.
\end{corollary}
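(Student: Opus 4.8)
The plan is to deduce this purely from the two preceding corollaries by elementary arithmetic, with no new geometry required. By the corollary on prime divisors, every prime factor of $m_o$ lies in $\{2,3,5\}$, and by the corollary just above, $m_o$ is not divisible by any of $16$, $27$, $25$, $10$, $15$, $12$, $18$. So I would begin by writing $m_o=2^a3^b5^c$ with $a,b,c\ge 0$ and translating the forbidden divisibilities into constraints on the exponents: from $16$, $27$, $25$ we get $a\le 3$, $b\le 2$, $c\le 1$; from $10$ and $15$ we get that $c=1$ forces $a=b=0$; and from $12$ and $18$ we get that $a\ge1$ together with $b\ge1$ forces $a=b=1$.

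The argument is then a short finite case check. First I would dispose of $c=1$: it forces $m_o=5$, which is prime, contrary to hypothesis, so $c=0$. Next, if exactly one of $a,b$ is positive, then $m_o$ is a prime power; since $m_o$ is not prime and $a\le 3$, $b\le 2$, the only options are $m_o=2^a$ with $a\in\{2,3\}$, giving $m_o\in\{4,8\}$, or $m_o=3^2=9$. Finally, if $a\ge1$ and $b\ge1$, the constraint forces $a=b=1$, hence $m_o=6$. Collecting the surviving cases gives $m_o\in\{4,6,8,9\}$, as claimed.

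Since each step is a short finite verification, there is essentially no obstacle; the only point that warrants care is confirming that the explicit list $\{16,27,25,10,15,12,18\}$ of forbidden multiples, combined with the restriction of prime divisors to $\{2,3,5\}$, is precisely enough to exclude every composite value outside $\{4,6,8,9\}$ — and the case analysis above is exactly this verification.
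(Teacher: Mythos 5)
Your proof is correct and is exactly the argument the paper intends: the corollary is stated without proof precisely because it is the immediate arithmetic consequence of Corollary \xref{cor-p} (prime divisors in $\{2,3,5\}$) and the preceding corollary (non-divisibility by $16,27,25,10,15,12,18$), and your finite case check on the exponents of $2^a3^b5^c$ fills that in faithfully.
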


\begin{lemma}
\label{main-m6-b236}
If $m_o= 6$, then $\B(F_o)=\basket{2, 3, 6}$.
Moreover, $\gcd(r_P,q_P)=1$ for all $P\in \B(F_o)$.
\end{lemma}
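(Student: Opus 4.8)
The plan is to feed the basket of $F_o$ into the identity \eqref{eq-RR-su}, used with $D=F_o$ (that is, $\alpha=1$ and $m=m_o=6$): since $c_P(lF_o)=0$ at the basket points where $F_o$ is Cartier, this reads $\sum_{P\in\B(F_o)}\Xi_{P,6}=-\tfrac{5}{2}$. First I would record the arithmetic constraint at each point: by \eqref{eq-RR-div1} (again with $\alpha=1$) we have $6q_P\equiv 0\bmod r_P$ for every $P\in\B(F_o)$. Setting $g_P:=\gcd(r_P,q_P)$ and $\rho_P:=r_P/g_P$, this is equivalent to $\rho_P\mid 6$; and since $0<q_P<r_P$ for a point of $\B(F_o)$, we have $\rho_P>1$, so $\rho_P\in\{2,3,6\}$.

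Next I would evaluate $\Xi_{P,6}$ in closed form at each such point. Because $\rho_PF_o$ is Cartier near $P$ and $\rho_P\mid 6$, Lemma \ref{lemma-su-div} gives $\Xi_{P,6}=\tfrac{6}{\rho_P}\,\Xi_{P,\rho_P}$; and Lemma \ref{lemma-Sp}, applied at $P$ with $r=r_P$, $q=q_P$ (so that its $s$ equals $g_P$ and its $m$ equals $\rho_P$, the coprimality hypothesis holding by the very definition of $g_P$), gives $\Xi_{P,\rho_P}=-\tfrac{\rho_P^2-1}{24\rho_P}\,r_P$. Hence $\Xi_{P,6}=-\tfrac{(\rho_P^2-1)g_P}{4\rho_P}$, and the identity above becomes
\[
\sum_{P\in\B(F_o)}\frac{(\rho_P^2-1)g_P}{\rho_P}=10,
\]
where the summand equals $\tfrac{3}{2}g_P$, $\tfrac{8}{3}g_P$ or $\tfrac{35}{6}g_P$ according as $\rho_P=2$, $3$ or $6$.

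Finally I would solve this Diophantine equation using the extra input from Proposition \ref{cor-point}, namely that some $P_1\in\B(F_o)$ has $6\mid r_{P_1}$. Clearing denominators, the displayed equality is $9A+16B+35C=60$ with $A=\sum_{\rho_P=2}g_P$, $B=\sum_{\rho_P=3}g_P$ and $C=\sum_{\rho_P=6}g_P$ in $\ZZ_{\ge 0}$. If $\rho_{P_1}=2$, then $6\mid 2g_{P_1}$ forces $3\mid g_{P_1}$, so this point alone contributes $9g_{P_1}\ge 27$, and one checks no completion to $60$ exists; if $\rho_{P_1}=3$, then $2\mid g_{P_1}$ forces a contribution $16g_{P_1}\ge 32$, and again there is no completion. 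Hence $\rho_{P_1}=6$; then $35C\le 60$ forces $C=1$, whence $g_{P_1}=1$, and the residual equation $9A+16B=25$ has the unique non-negative solution $A=B=1$. This pins down $\B(F_o)=\basket{2,3,6}$ and forces $g_P=\gcd(r_P,q_P)=1$ at all three points, which is the assertion.

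I do not expect a genuine obstacle; the argument is essentially the bookkeeping above. The two points that require care are: (i) checking that the hypotheses of Lemma \ref{lemma-Sp} hold simultaneously at every point of $\B(F_o)$ — this is exactly what $\rho_P\mid 6$ provides, via Lemma \ref{lemma-su-div}; and (ii) making the small case analysis exhaustive, in particular ruling out $g_{P_1}>1$ and the presence of a second point whose index is divisible by $6$, each of which would push the left-hand side of the displayed identity above $10$.
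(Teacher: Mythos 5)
Your proof is correct, but it takes a different route from the paper's. The paper deduces the lemma from the already-established prime-multiplicity classification (Proposition \ref{prop-Dprime}): it applies that proposition to $D=3F_o$ (with $m=2$) and to $D'=2F_o$ (with $m=3$), uses the point $P_1$ of index divisible by $6$ to single out the cases $\B(3F_o)=\basket{2,6}$ and $\B(2F_o)=\basket{3,6}$, and then recovers $\B(F_o)$ from $F_o=D-D'$. You instead run the Riemann--Roch identity \eqref{eq-RR-su} directly with $m=6$, $D=F_o$, reducing each local term $\Xi_{P,6}$ to the closed form of Lemma \ref{lemma-Sp} via Lemma \ref{lemma-su-div} (the key observation being that $\rho_P:=r_P/\gcd(r_P,q_P)$ divides $6$, so $\rho_PF_o$ is Cartier at $P$), and then solve the single Diophantine equation $9A+16B+35C=60$; your case analysis, including the use of Proposition \ref{cor-point} to rule out $\rho_{P_1}\in\{2,3\}$ and the exclusion of $g_{P_1}>1$, is exhaustive and correct. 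The paper's reduction is shorter once Proposition \ref{prop-Dprime} is in hand and recycles that classification verbatim; your direct computation is self-contained apart from the two lemmas, delivers both conclusions ($\B(F_o)=\basket{2,3,6}$ and $\gcd(r_P,q_P)=1$) from one equation, and would adapt uniformly to the other composite multiplicities (e.g.\ $m_o=4$, where the paper's Lemma \ref{main-m4-b244} must separately eliminate the subcase $\B(2F_o)=\basket{8}$).
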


\begin{proof}
Take $D=3F_o$.
Then $2D\sim f^*(o)$ but $D$ is not Cartier at $P_1$.
Hence $(X,P_1)$ is of index 6 and for $D$
we are in the case \ref{main-m2-b26},
that is, $\B(3F_0)=\basket{2,\, 6}$.
At all points $P_i\notin \B(3F_0)$ the divisor $3F_o$ 
is Cartier.
Similarly, take $D'=2F_o$. Then for
$D$ we get the case \ref{main-m3-b36},
that is, $\B(2F_o)=\basket{3,\, 6}$.
Hence $\B(F_o)$ contains three points 
$P_1$, $P_2$, $P_3$ of indices $6$, $2$, $3$,
respectively, and in all other points 
both $D'=2F_o$ and $D=3F_o$ are Cartier.
Hence $F_o=D-D'$ is Cartier outside of $P_1$, $P_2$, $P_3$
and $\B(F_o)=\basket{2, 3, 6}$.
\end{proof}

\begin{lemma}
\label{main-m4-b244}
If $m_o= 4$, then $\B(F_o)=\basket{2, 4, 4}$.
Moreover, $\gcd(r_P,q_P)=1$ for all $P\in \B(F_o)$.
\end{lemma}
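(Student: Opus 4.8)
The plan is to mimic the argument just given for $m_o=6$ in Lemma~\ref{main-m6-b236}, replacing the factorization $6 = 2\cdot 3$ with $4 = 2\cdot 2$ and the two prime divisors with the single prime $2$ together with the Cartier data of $2F_o$. First I would set $D := 2F_o$, so that $2D = f^*(o)$ while $D$ is not Cartier at $P_1$ (which has index divisible by $m_o=4$, by Proposition~\ref{cor-point}). Applying Propositions~\ref{prop-Dprime} and~\ref{prop-Dprime-1} to $D$, the point $(X,P_1)$ must occur in one of the $m=2$ cases of Proposition~\ref{prop-Dprime}, and since its index is divisible by $4$ the only option is $r_{P_1}=8$ with $\B(D)=\basket{8}$, i.e. $\B(2F_o)=\basket{8}$. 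In particular $2F_o$ is Cartier at every point of $\B(F_o)$ other than $P_1$.

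Next I would analyze $P_1$ itself more closely to see that $F_o$, not merely $2F_o$, is the relevant divisor there and to pin down the local structure. At $P_1$ we have index $r_1 = 8$ and $m_o = 4$, so writing $F_o \sim q_1 K_X$ near $P_1$ we get $4q_1 \equiv 0 \bmod 8$, hence $q_1$ is even, say $q_1 = 2k_1$. The condition that $D = 2F_o$ sits in case~\xref{main-m2-b8} and the constraints \eqref{eq-RR-e3}, \eqref{eq-RR-cPK-K} (exactly as exploited in the proof of Lemma~\ref{main-m6-b236} and in Proposition~\ref{prop-Dprime-1}) should force $\gcd(r_1,q_1)=1$ to fail unless in fact $2F_o$ behaves at $P_1$ like $\pm K$ or $\pm 3K$; tracking this through, I expect to conclude that $F_o$ has index exactly $8$ at $P_1$ with $\gcd(8,q_1)=1$, contributing one point of type $8$ to $\B(F_o)$.

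To produce the remaining two points of index $4$, the idea is to count Euler characteristics via \eqref{eq-RR-su}: since $m=m_o=4$ here is not prime, one cannot quote Proposition~\ref{prop-Dprime} directly, but $\sum_{P\in\B}\Xi_{P,4} = -\tfrac{3}{2}$ must still hold. Splitting off the contribution of $P_1$ (computable from Lemma~\ref{lemma-Sp} or Lemma~\ref{lemma-su-div} since $2F_o$ is Cartier at all other points, so for those points $\Xi_{P,4} = 2\,\Xi_{P,2}$, and $\Xi_{P,2} = c_P(F_o)$), the residual equation becomes a constraint of the shape $(m{+}1)\sum' r_P = \text{const}$ on the points where $F_o$ is not Cartier but $2F_o$ is, forcing those to be exactly two points of index $4$. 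Finally, $F_o = D - F_o$... rather, one recovers $F_o$ as a combination: $2F_o$ is Cartier away from the index-$4$ points and $P_1$, so $F_o$ is Cartier away from $P_1$ and those two points, giving $\B(F_o)=\basket{2,4,4}$; running \eqref{eq-RR-div1} at each index-$4$ point shows $\gcd(r_P,q_P)=1$ there as well.

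The main obstacle is the middle step: because $4$ is not prime, Proposition~\ref{prop-Dprime} applies only to the prime quotient $D=2F_o$, and I must extract information about $F_o$ at $P_1$ (and about the index-$4$ points) from the combination of the Cartier-index relation \eqref{eq-RR-div1}, the summed relation \eqref{eq-RR-su} for $m=4$, and the local $c_P$-identities \eqref{eq-RR-e3}, \eqref{eq-RR-cPK-K} — essentially redoing a small-case version of the analysis in Proposition~\ref{prop-Dprime-1} for the composite modulus. I expect the numerics to be tight enough to leave only $\basket{2,4,4}$, but verifying that no parasitic point of index $2$ or $4$ can be added (compare the \emph{Warning} after Theorem~\ref{main}) and that the weights $b_i$ and residues $q_i$ land in the ranges recorded in Table~\xref{table} will be the delicate part.
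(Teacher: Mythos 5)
There is a genuine gap at the first step, and it sends the whole argument down the wrong branch. Proposition \ref{cor-point} produces a point $P_1$ at which the index of $F_o$ --- that is, $r_{P_1}/\gcd(r_{P_1},q_{P_1})$ --- is divisible by $m_o=4$. This gives $4\mid r_{P_1}$ and nothing more; an index-$4$ singularity with $\gcd(q_{P_1},4)=1$ satisfies it perfectly well, and such a point lies in $\B(2F_o)$. So after applying Proposition \ref{prop-Dprime} to $D=2F_o$ you may not discard $\B(2F_o)=\basket{4,4}$: in fact that is the branch that actually occurs, while $\B(2F_o)=\basket{8}$ is the one that has to be \emph{excluded}. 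Your subsequent steps are built entirely on $r_{P_1}=8$ and are moreover internally inconsistent: you correctly note that $4q_1\equiv 0\pmod 8$ forces $q_1$ to be even, and then propose to conclude $\gcd(8,q_1)=1$, which is impossible; besides, a non-Cartier point of index $8$ would put an $8$ into $\B(F_o)$, incompatible with the basket $\basket{2,4,4}$ you are trying to reach.

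The correct case division is the following. Since $2F_o$ is automatically Cartier at every point of index $2$, the basket $\B(2F_o)$ contains no index-$2$ points, which leaves only $\basket{8}$ and $\basket{4,4}$ among the $m=2$ cases of Proposition \ref{prop-Dprime}. The case $\B(2F_o)=\basket{8}$ is then eliminated by the Riemann--Roch relations (for instance, \eqref{eq-RR-e3} with $l=1$ fails for every admissible pair $(q_1,b_1)$ at an index-$8$ point with $q_1$ even). In the case $\B(2F_o)=\basket{4,4}$ the counting you sketch at the end does work: at the two index-$4$ points one has $\gcd(q_i,4)=1$ and $\Xi_{P_i,4}=-5/8$ by Lemma \ref{lemma-Sp}; at every other point of $\B(F_o)$ the divisor $2F_o$ is Cartier, so $q_P=r_P/2$ and $\Xi_{P,4}=2\Xi_{P,2}=-r_P/8$ by Lemmas \ref{lemma-su-div} and \ref{lemma-Sp}; then \eqref{eq-RR-su} forces $\sum_{j\neq 1,2} r_j=2$, i.e.\ exactly one extra point, of index $2$. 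Hence $\B(F_o)=\basket{2,4,4}$, and $\gcd(r_P,q_P)=1$ holds throughout because $q_P=1$ at the index-$2$ point and $\gcd(q_P,4)=1$ at the index-$4$ points.
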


\begin{proof}
Clearly, $2F_o$ is Cartier at all points of index $2$.
Hence $\B(2F_o)$ contains no such points
and for $\B(2F_o)$ we are in the case \ref{main-m2-b8}
or \ref{main-m2-b44}.
For all points $P_i\notin \B(2F_o)$
the divisor $2F_o$ is Cartier at $P_i$.
Hence, $q_{i}=r_{i}/2$.

Assume that $\B(2F_o)=\basket{8}$.
Let $P\in \B(2F_o)$. 
Since $4F_o$ is Cartier, $4q_P\equiv 0 \mod 8$
(but $2q_P\not \equiv 0 \mod 8$).
By Lemma \ref{lemma-Sp} and \ref{lemma-su-div} we have 
\[
\Xi_{P_1,4}=-\frac{5}{4},
\qquad \Xi_{P_j,8}=4\Xi_{P_j,2}=-\frac {r_{j}}4 , 
\quad j\neq 1.
\]
Therefore, by \eqref{eq-RR-su} the following holds $\sum_{i\neq 1} r_i=1$,
a contradiction.

Hence $\B(2F_o)=\basket{4,4}$. 
At both points $P_i\in \B(2F_o)$
we have $F_o\sim \pm K_X$ near $P_i$.
Again by Lemma \ref{lemma-Sp} and \ref{lemma-su-div} 
\[
\Xi_{P_i,4}=-\frac{5}{8},\quad i=1,\, 2
\qquad \Xi_{P_j,4}=2\Xi_{P_j,2}=-\frac {r_{j}}8, 
\quad j\neq 1, \, 2.
\]
Therefore, by \eqref{eq-RR-su} we have $\sum\limits_{i\neq 1,\, 2} r_i=2$
and there is only one solution $\B(F_o)=\basket{4,4,2}$.
\end{proof}

\begin{corollary}
$m_o\neq 8$
\end{corollary}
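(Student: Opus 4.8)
The plan is to derive a contradiction from the assumption $m_o=8$ by the same kind of $\Xi$-bookkeeping used in Lemmas \ref{main-m6-b236} and \ref{main-m4-b244}, now pushed one level further. First I would locate the point $P_1\in\B(F_o)$ whose index $r_1$ is divisible by $m_o=8$ (Proposition \ref{cor-point}), so $r_1\in\{8,16,24,\dots\}$; but by the divisibility corollaries already proved ($m_o$ not divisible by $16$, so in particular $r_1$ not divisible by $16$, and the only prime divisors allowed are $2,3,5$) the index $r_1$ must be exactly $8$ or $24$. Next I would apply Lemma \ref{main-m4-b244} to the divisor $D=2F_o$, whose multiplicity along the fiber is $4$: this forces $\B(2F_o)=\basket{2,4,4}$, with $2F_o$ Cartier away from those three points. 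Similarly $4F_o$ has multiplicity $2$ along the fiber, so by Proposition \ref{prop-Dprime} applied to $4F_o$ we land in one of the cases \ref{main-m2-b8}, \ref{main-m2-b26}, \ref{main-m2-b44}, \ref{main-m2-b224}, \ref{main-m2-b2222}; but since $4F_o$ is automatically Cartier at every point of index $2$ or $4$, and $\B(2F_o)=\basket{2,4,4}$ already pins down the points of index divisible by $4$, the only point left where $4F_o$ can fail to be Cartier is $P_1$ of index $8$. Hence $\B(4F_o)=\basket{8}$, which is case \ref{main-m2-b8}, and in particular $r_1=8$ exactly (ruling out $r_1=24$).

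Having fixed the local picture, I would then run the identity \eqref{eq-RR-su} with $m=m_o=8$: $\sum_{P\in\B}\Xi_{P,8}=-7/2$. The contribution of $P_1$ is computed from Lemmas \ref{lemma-Sp} and \ref{lemma-su-div}: since $8F_o$ is Cartier at $P_1$ but $4F_o$ is not, we have $\gcd(r_1,q_1)=1$ (as $r_1=8$ and $8q_1\equiv0$, $q_1$ odd), so $s_{P_1}=1$, $m=8$ in Lemma \ref{lemma-Sp}, giving $\Xi_{P_1,8}=-\tfrac{8^2-1}{24\cdot 8}\cdot 8=-\tfrac{63}{24}=-\tfrac{21}{8}$. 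For the two index-$4$ points $P_2,P_3$ coming from $\B(2F_o)$, the divisor $4F_o$ is Cartier there while $2F_o$ is not, so Lemma \ref{lemma-su-div} gives $\Xi_{P_j,8}=2\,\Xi_{P_j,4}$, and $\Xi_{P_j,4}$ in turn (by Lemma \ref{lemma-Sp} with $\gcd(r_j,q_j)=1$, $r_j=4$) equals $-\tfrac{15}{24\cdot4}\cdot4=-\tfrac{5}{8}$, so $\Xi_{P_j,8}=-\tfrac{5}{4}$. For the single index-$2$ point $P_4$ from $\B(2F_o)$, $2F_o$ and hence $4F_o$ and $8F_o$ are all Cartier... wait—at an index-$2$ point $2F_o$ is Cartier, so $F_o\sim q_4K_X$ with $q_4=1$; here $2F_o$ Cartier means $P_4\notin\B(2F_o)$, contradicting $\B(2F_o)=\basket{2,4,4}$. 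So more carefully: the index-$2$ point in $\B(2F_o)$ is a point where $2F_o$ is \emph{not} Cartier, impossible for index $2$. This already signals where the contradiction crystallizes; the clean route is to instead observe that any point of $\B(2F_o)$ must have index divisible by $2$ with $2F_o$ non-Cartier, so it cannot have index $2$, forcing $\B(2F_o)$ to avoid index-$2$ points — but then $\basket{2,4,4}$ is impossible and we are done immediately, \emph{or} (if one reads Lemma \ref{main-m4-b244} as allowing the index-$2$ point to arise because $4F_o$, not $2F_o$, governs it) we get the arithmetic contradiction below.

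Assuming the arithmetic route: all remaining points $P$ of $\B$ carry neither index $8$ nor lie in $\B(2F_o)$, so $8F_o$ is Cartier and $F_o$ is Cartier there too (since $\gcd$-type reasoning forces $q_P\equiv0$), contributing $0$. Plugging into \eqref{eq-RR-su}: $-\tfrac{21}{8}-\tfrac{5}{4}-\tfrac{5}{4}+\Xi_{P_4,8}=-\tfrac72$, i.e. $\Xi_{P_4,8}=-\tfrac72+\tfrac{21}{8}+\tfrac{10}{4}=-\tfrac{28}{8}+\tfrac{21}{8}+\tfrac{20}{8}=\tfrac{13}{8}>0$, which is absurd since each $\Xi_{P,m}\le 0$ (indeed every $c_P(lD)\le0$). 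Either way — whether via the index-$2$ obstruction or the positivity of $\Xi_{P_4,8}$ — we reach a contradiction, so $m_o\ne 8$.

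The main obstacle is getting the local analysis of $4F_o$ airtight: one must be sure that the point structure of $\B(4F_o)$ is exactly $\basket{8}$ and not, say, spoiled by a phantom index-$24$ point, and that the index-$2$ point produced by Lemma \ref{main-m4-b244} is correctly accounted for (it is a genuine non-Gorenstein point at which $2F_o$ happens to be Cartier but $F_o$ is not — precisely the phenomenon flagged in the Warning). Once the bookkeeping of which divisor is Cartier at which point is nailed down, the rest is the routine $\Xi$-summation forcing a strictly positive quantity to be non-positive.
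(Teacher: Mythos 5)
Your overall strategy --- feed $D=2F_o$ into the multiplicity-$4$ analysis and collide the outcome with the index-$8$ point supplied by Proposition \ref{cor-point} --- is exactly the paper's route, and in fact the contradiction is already on the table after your second step: Proposition \ref{cor-point} gives a point $P_1$ at which the index of $F_o$ is divisible by $8$, so $2F_o$ is not Cartier at $P_1$ and $P_1\in\B(2F_o)$ with $r_1$ divisible by $8$, which is flatly incompatible with $\B(2F_o)=\basket{2,4,4}$ (all of whose members have index at most $4$). That one sentence is the paper's entire proof. You walk past it and pursue two other endings, both problematic. The ``index-$2$'' objection is a true observation (by Lemma \ref{lemma-Kawamata-index-divisors}, twice any Weil divisor is Cartier at an index-$2$ point, so such a point can never lie in $\B(2F_o)$), but the honest conclusion to draw from it is that Lemma \ref{main-m4-b244} does not transfer verbatim to $D=2F_o$: one must rerun its proof, whose two branches are $\B(4F_o)=\basket{8}$ and $\B(4F_o)=\basket{4,4}$, and in the present setting the ``extra'' points (those with $2F_o$ non-Cartier but $4F_o$ Cartier) have index divisible by $4$, not $2$.

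The more serious gap is in your arithmetic route: it mixes inputs from the two mutually exclusive branches. The two index-$4$ points with $2F_o$ non-Cartier are extracted from $\B(2F_o)\supseteq\basket{4,4}$, i.e.\ from the branch $\B(4F_o)=\basket{4,4}$, while the index-$8$ point $P_1$ forces the other branch $\B(4F_o)=\basket{8}$; indeed $\B(2F_o)=\basket{2,4,4}$ would give $\B(4F_o)=\emptyset$, contradicting $P_1\in\B(4F_o)$. So the three points entering your sum $-\tfrac{21}{8}-\tfrac54-\tfrac54$ are never simultaneously established, and that computation, though arithmetically fine, proves nothing on its own. Two smaller slips: ``$m_o$ not divisible by $16$'' says nothing about $r_1$ --- the bound $r_1=8$ comes from $P_1\in\B(4F_o)$ together with Proposition \ref{prop-Dprime}; and the claim that every $c_P(lD)\le0$ is false (e.g.\ $c_P(-K)>0$ for $b_P=1$) --- the correct statement is $\Xi_{P,m}\le0$, obtained from Lemmas \ref{lemma-Sp} and \ref{lemma-su-div}. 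The repair is simply to stop after your third step and record the index clash $8\notin\{2,4\}$.
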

\begin{proof}
Indeed, if $m_o=8$, then for $\B(2F_o)$
there is only one possibility from Lemma \ref{main-m4-b244}.
This contradicts Proposition \ref{cor-point}.
\end{proof}

\begin{lemma}
$m_o\neq 9$.
\end{lemma}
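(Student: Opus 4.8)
The plan is to mimic the structure of the proofs of Lemma \ref{main-m6-b236} and Lemma \ref{main-m4-b244}, using the auxiliary divisor $D:=3F_o$. Since $m_o=9$ has $3$ as its only prime divisor, Corollary \ref{cor-p} does not immediately rule it out, so we must work harder. First I would observe that $3D=9F_o=f^*(o)$, and that by Proposition \ref{cor-point} there is a point $P_1\in\B(F_o)$ whose index $r_1$ is divisible by $9$. Applying Proposition \ref{prop-Dprime} to $D=3F_o$ (with $m=3$), we are in one of the cases \ref{main-m3-b9}, \ref{main-m3-b333}, or \ref{main-m3-b36}; but since $r_1$ must be divisible by $9$ and at the same time $D=3F_o$ is not Cartier at $P_1$ with index $r_{P_1}^{D}$ of $(X,P_1)$ dividing $9$ in the basket $\B(D)$, the only surviving option is $\B(3F_o)=\basket{9}$, i.e. case \ref{main-m3-b9}, with $\B(3F_o)=\basket{9}$ concentrated at $P_1$.

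Next I would extract what this tells us about $F_o$ at $P_1$ and at the remaining points. At every point $P_i\neq P_1$ of $\B(F_o)$, the divisor $3F_o$ is Cartier, so $3q_i\equiv 0\bmod r_i$; combined with the terminality constraint and $\gcd$ conditions this forces $r_i\in\{3\}$ or possibly a small list, and in any case $q_i\equiv \pm r_i/3$. At $P_1$ the index is $9$ and $9q_1\equiv 0\bmod r_1$ is automatic; the case-\ref{main-m3-b9} data from Table \ref{table} (the entry \type{9}) tells us the relation between $b_1$ and $q_1$. The key computational input is then the global identity \eqref{eq-RR-su}, $\sum_{P\in\B}\Xi_{P,9}=-\frac{9-1}{2}=-4$. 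Using Lemma \ref{lemma-su-div} to write $\Xi_{P_i,9}=3\,\Xi_{P_i,3}$ at the points where $3F_o$ is Cartier, and Lemma \ref{lemma-Sp} (applied with $m=3$, $r=r_i$) to evaluate $\Xi_{P_i,3}=-\frac{r_i}{9}$, the contributions of the points $P_i\neq P_1$ become $-\frac{r_i}{3}$ each. For $P_1$, either $\gcd(r_1,q_1)=1$ — in which case Lemma \ref{lemma-Sp} with $m=9$ gives $\Xi_{P_1,9}=-\frac{9^2-1}{24\cdot 9}\cdot 9=-\frac{80}{24}=-\frac{10}{3}$ — or $\gcd(r_1,q_1)=3$ (so $q_1=3$ or $6$), in which case I would compute $\Xi_{P_1,9}$ directly from \eqref{eq-S1}, or split it via the sub-lattice structure. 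Substituting into \eqref{eq-RR-su} yields a Diophantine equation for $\sum_{i\neq 1}r_i$ which I expect to have no admissible solution (the residual sum will come out non-integral or negative), giving the contradiction.

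The hard part will be the direct evaluation of $\Xi_{P_1,9}$ in the case $\gcd(r_1,q_1)=3$, i.e. when $q_1\in\{3,6\}$ and $r_1=9$, since Lemma \ref{lemma-Sp} does not apply and one must use \eqref{eq-S1} together with the precise value of $b_1$ allowed by the \type{9} row of Table \ref{table} (namely $b_1=\pm 2q_1/3$, so $b_1\equiv\pm2$); one must then also double-check the parity/congruence conditions $\gcd(b_1,9)=1$. A secondary subtlety is making sure that no point $P_i\neq P_1$ can itself have index divisible by a higher power structure that would change its $\Xi$-contribution — but since $3F_o$ is Cartier at all such points and Corollary \ref{cor-p} bounds the primes, each such $r_i$ is a divisor of $9$ coprime-completed, so $r_i=3$ (indices $1$ contribute nothing). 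With these contributions pinned down, \eqref{eq-RR-su} reads, in the generic-$q_1$ case, $-\frac{10}{3}-\frac{1}{3}\sum_{i\neq1}r_i=-4$, i.e. $\sum_{i\neq1}r_i=2$, which cannot be a sum of $3$'s; and the $q_1\in\{3,6\}$ case is dispatched similarly once $\Xi_{P_1,9}$ is computed. This exhausts all possibilities and shows $m_o\neq 9$.
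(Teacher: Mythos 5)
Your argument is essentially the paper's proof: reduce to $\B(3F_o)=\basket{9}$ via Proposition \ref{prop-Dprime} and Proposition \ref{cor-point}, evaluate $\Xi_{P_1,9}=-10/3$ and $\Xi_{P_i,9}=3\Xi_{P_i,3}=-r_i/3$ by Lemmas \ref{lemma-Sp} and \ref{lemma-su-div}, and derive $\sum_{i\neq 1}r_i=2$ from \eqref{eq-RR-su}, which contradicts $r_i\equiv 0\bmod 3$. The one branch you leave unfinished --- $\gcd(r_1,q_1)=3$, which you single out as ``the hard part'' --- is in fact vacuous: if $q_1\in\{3,6\}$ and $r_1=9$, then $3F_o\sim 3q_1K_X\sim 0$ would be Cartier at $P_1$, contradicting $P_1\in\B(3F_o)$; equivalently, Proposition \ref{cor-point} guarantees that the index of the \emph{divisor} $F_o$ at $P_1$, namely $r_1/\gcd(r_1,q_1)$, and not merely the index of the point, is divisible by $9$, forcing $\gcd(q_1,9)=1$. (The values $q_1=3,6$ you imported from the \type{9} row of Table \ref{table} belong to the $m_o=3$ situation, not to $m_o=9$.) So the only case that actually occurs is the one you completed, and it coincides with the paper's computation.
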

\begin{proof}
Assume that $m_o=9$. Take $D:=3F_o$.
Then $3D\sim f^*(o)$ but $D$ is not Cartier at $P_1$.
Hence, $\gcd(q_{1},r_{1})=1$, $(X,P_1)$ is of index $9$ and for $D$
we are in the case \ref{main-m3-b9},
that is, $\B(D)=\basket{9}\subset \B(F_o)$.
In all points $P_i\in \B(F_o)$, $P_i\neq P_1$ the divisor $D=3F_o$ 
is Cartier.
Hence by Lemma \ref{lemma-Sp} and \ref{lemma-su-div} we have 
\[
\Xi_{P_1,9}=-\frac{10}{3},
\qquad \Xi_{P_i,9}=3\Xi_{P_i,3}=-\frac {r_{i}}3, \quad i\neq 1.
\]
Therefore, by \eqref{eq-RR-su}
\[
-4=\sum \Xi_{P_i,m}=-\frac{10}{3}-\frac 13 \sum_{i\neq 1} r_{i},
\qquad r_{i}= 2.
\]
This contradicts $r_{i}\equiv 0\mod 3$.
\end{proof}

\begin{computation}
The last lemma finishes the proof of Theorem \ref{main}.
It remains to compute values $b_k$, $q_k$, and $K_{F_g}^2$
in Table \ref{table}.

First we compute the possible values of $q_i$.
We may assume that $1\le q_i<r_i$.
In regular cases (\type{2,3,6}, \type{5,5}, \type{3,3, 3},
\type{2,4,4}, \type{2,2,2,2}) we have $\gcd (q_i, r_i)=1$
(see Lemmas \ref{main-m6-b236} and \ref{main-m4-b244})
and $m_o\ge r_i$ for all $i$.
Take $1\le l\le m_o-1$ so that $q_il\equiv 1\mod r_i$.
Then by \eqref{eq-RR-cPK-K} and \eqref{eq-RR-e3}
the following equality holds
\[
\sum_i c_{P_i}(lF_o)=\sum_i c_{P_i}(K)=
-\sum_i \frac{r_i^2-1}{12r_i}=-\frac{l}{m_o}.
\]
From this we immediately obtain $l\equiv q_i\equiv -1 \mod r_i$
for all $i$.

If $m_o=2$ (cases \type{4\times 2},
\type{2,2,4}, \type{4,4}, \type{2,6}, 
\type{8}), then $2F_o$ is Cartier.
Hence $q_i=r_i/2$. It remains to consider only cases
\type{9} and \type{3,6}.
In case \type{9}, since $3F_o$ is Cartier, 
we have $q:=q_1=3$ or $6$.
If $q=3$, then by \eqref{eq-RR-e3} we have
\[
-1=3c_P(F_o)=3c_P(3K)=- \frac {40}{6}
+\frac{b(9-b)}{6}+\frac{\ov{2b}(9-\ov{2b})}{6}. 
\]
Hence, $34=b(9-b)+\ov{2b}(9-\ov{2b})$ and 
$5b^2\equiv 2\mod 9$. This immediately implies $b\equiv \pm 2$.
Similarly, if $q=6$, then $b^2\equiv -2\mod 9$
and $b\equiv \pm 4$.

Finally consider the case \type{3,6}.
Then by \eqref{eq-RR-cPK-K} and \eqref{eq-RR-cP-def}
\[
c_{P_1}(F_o)=
\begin{cases}
-2/9&\text{if $q_1=1$} 
\\
-1/9&\text{if $q_1=2$} 
\end{cases}
\qquad
c_{P_2}(F_o)=
\begin{cases}
-5/9&\text{if $q_1=2$} 
\\
-1/9&\text{if $q_1=4$} 
\end{cases}
\]
The equality $c_{P_1}(F_o)+c_{P_2}(F_o)=-1/3$
(see \eqref{eq-RR-e3}) holds only if $q_1=1$, $q_2=4$. 

\begin{corollary}
The fiber $F_o$ is regular if and only if $q_i\equiv -1\mod r_i$
for all $i$. In particular, 
for regular $F_o$ near each point
$P\in F_o$ 
where $F_o$ is not Cartier we have $K_X+F_o\sim 0$ .
\end{corollary}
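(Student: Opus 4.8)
The plan is to read the equivalence off the classification in Theorem~\ref{main} together with the values of $q_i$ just computed. The implication ``$F_o$ regular $\Rightarrow q_i\equiv -1\bmod r_i$ for all $i$'' is exactly the content of the computation carried out just above (comparing $\sum_P c_P(lF_o)$ with $\sum_P c_P(K)$ via \eqref{eq-RR-cPK-K} and \eqref{eq-RR-e3}), so only the converse needs proof.

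For the converse I would argue by contraposition; there is nothing to prove when $m_o=1$, so assume $m_o\ge 2$ and $F_o$ irregular. By Theorem~\ref{main} and Table~\ref{table}, $F_o$ is then of one of the six types \type{3,6}, \type{9}, \type{2,2,4}, \type{4,4}, \type{2,6}, \type{8}, and in each case I would exhibit a point $P_i\in\B(F_o)$ with $q_i\not\equiv -1\bmod r_i$. For the four types with $m_o=2$ the divisor $2F_o=f^*(o)$ is Cartier, so $q_i=r_i/2$ at every $P_i$; each of the baskets $\basket{2,2,4}$, $\basket{4,4}$, $\basket{2,6}$, $\basket{8}$ contains a point with $r_i\ge 4$, and for such a point $r_i/2\ne r_i-1$, i.e.\ $q_i\not\equiv -1\bmod r_i$. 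For type \type{9} the computation above gave $q_1\in\{3,6\}$, and neither $3$ nor $6$ is congruent to $-1$ modulo $9$; for type \type{3,6} it gave $q_1=1\not\equiv -1\bmod 3$. Hence an irregular $F_o$ always carries some $q_i\not\equiv -1\bmod r_i$, and the converse follows.

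For the last assertion, suppose $F_o$ is regular and let $P$ be a point of $F_o$ at which $F_o$ is not Cartier; these are precisely the points $P_i\in\B(F_o)$ (cf.\ \ref{not-01}). Near $P=P_i$ we have $F_o\sim q_iK_X$, hence $K_X+F_o\sim (q_i+1)K_X$ near $P$. Since $q_i\equiv -1\bmod r_i$ the integer $q_i+1$ is divisible by $r_i$, so $(q_i+1)K_X$ is Cartier near $P$ by Lemma~\ref{lemma-Kawamata-index-divisors}; as the local Picard group at the isolated singular point $P$ is trivial, $K_X+F_o\sim 0$ near $P$.

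I do not expect a real obstacle, since everything reduces to the finite case list already established; the work is purely in the bookkeeping. The two points that deserve a moment's care are verifying that each of the six irregular baskets indeed contains a point violating $q_i\equiv -1\bmod r_i$, and, in the final sentence, the identification of the locus where $F_o$ fails to be Cartier with the finite set $\B(F_o)$ (the definition of $\B(D)$ in \ref{not-01}).
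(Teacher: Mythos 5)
Your proposal is correct and takes essentially the same route as the paper: the corollary is simply read off from the case-by-case determination of the $q_i$ carried out immediately before it (the regular types give $q_i\equiv -1\bmod r_i$, while each irregular type --- \type{3,6}, \type{9}, and the four $m_o=2$ types with $q_i=r_i/2$ --- visibly contains a point with $q_i\not\equiv -1\bmod r_i$), and the final assertion is the standard observation that $(q_i+1)K_X$ is Cartier, hence trivial, on the local germ at $P_i$. Your extra bookkeeping for the converse and the explicit appeal to Lemma~\xref{lemma-Kawamata-index-divisors} are exactly what the paper leaves implicit; there are no gaps.
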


\end{computation}

\begin{computation}
Now we find the possible values of $b_i$.
In all cases except for \type{5,5} and \type{9}
the relations $\gcd(r_i,q_i)=1$ is sufficient to get the conclusion.
The case \type{9} was treated above.
Consider the case \type{5,5}. Then by \eqref{eq-RR-cPK-K} and 
\eqref{eq-RR-e3} we have $10=b_1(5-b_1)+b_2(5-b_2)$.
Hence $b_1^2+b_2^2\equiv 0\mod 5$.
\end{computation}

\begin{computation}
To obtain the possible values for $K_{F_g}^2$
we use \eqref{eq-Delta} with $a=0$.
Since $\Delta_a$ is an integer, it is sufficient to 
compute $\delta_0=c_P(-K)-c_P(-K-F_o)$. 
Table \ref{table1}
gives all values of $\delta_0$.
For example, if $F_o$ is regular, then
$q_P\equiv -1\mod r_P$ for all $P$ and 
$\delta_0=\sum c_P(-K)=\sum c_P(F_o)$. So by 
\eqref{eq-RR-cPK-K} and \eqref{eq-RR-e3}
we have $\delta_0=-1/m_o$.
Assume that $q_P=r_P/2$ (and all the $r_p$ are even). Then 
\[
\delta_0 =\sum_{P\in \B}\left( c_P(-K)- c_P\left(\frac{r_P-2}{2}K\right)\right).
\]
Hence by \eqref{eq-RR-cPK-K} and \eqref{eq-RR-cP-def}
\begin{itemize}
\item[]
$r_P=2$ $\Longrightarrow$ $\delta_0= c_P(-K)= -1/8$,
\item []
$r_P=4$ $\Longrightarrow$ $\delta_0= c_P(-K)- c_P(K)= 1/4$,
\item []
$r_P=6$ $\Longrightarrow$ $\delta_0= c_P(-K)- c_P(2K)= 5/8$,
\item []
$r_P=8$ $\Longrightarrow$ $\delta_0= c_P(-K)- c_P(3K)= 
1$ or $0$ if $b_P=1$ or $3$, respectively.
\end{itemize}
This immediately gives the values of $\delta_0$ in cases \type{2,2,4},
\type{4,4}, \type{2,6}, and \type{8}. Cases \type{3,6} and \type{9}
are similar.
\end{computation}

\begin{table}[h]
\renewcommand{\extrarowheight}{5pt}
\renewcommand{\arraystretch}{1.3}
\caption{}
\label{table1}
\begin{tabular}{|p{50pt}||c|c|c|c|c|c|c|c|c|c|c|}
\hline
 &regular &\type{3,6}
&\type{9}
&\type{2,2,4}&\type{4,4}&\type{2,6}
&\type{8}
\\[5pt]
\hline
$\delta_0$&$-\frac1{m_o}$
&$\frac23$& $\frac{6-q_1}9$&
$0$&$\frac12$&$\frac12$&$\frac{3-|b_1|}{2}$
\\[5pt] \hline
\end{tabular}
\end{table}

\section{Examples}
\label{sect-exam}
In this section we construct some examples of del Pezzo bundles
with multiple fibers. We 
use notation of Construction \ref{base-change}.
We start with regular case.
\begin{proposition}
\label{prop-examples}
Let $f'\colon X'\to Z'\ni o'$ be a Gorenstein del Pezzo bundle.
Assume that the central fiber $F_o':=f'^{-1}(o')$
has only Du Val singularities.
Assume also that the cyclic group 
$\muu_{m_o}$ acts on $X'$ and $Z'$ so that
\begin{enumerate}
\item 
the action on $Z'$ is free outside of $o'$,
\item
$f'$ is $\muu_{m_o}$-equivariant, 
\item 
the action on $F_o'$ is free in codimension one,
\item 
the quotient $F_o:=F_o'/\muu_{m_o}$ has only
Du Val singularities.
\end{enumerate}
Then $f\colon X=X'/\muu_{m_o}\to Z=Z'/\muu_{m_o}$
is a del Pezzo bundle with regular central fiber of multiplicity
$m_o$ and, moreover, $F_o\sim -K_X$ near each point $P\in X$.
\end{proposition}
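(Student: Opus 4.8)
The plan is to verify directly that the quotient $f\colon X\to Z$ is a del Pezzo bundle and then identify $F_o$ as an anticanonical divisor near every non-Cartier point. First I would check that $X$ has terminal singularities: since the $\muu_{m_o}$-action on $X'$ is free in codimension two (it is free outside $o'$ on $Z'$, hence on $X'$ outside the central fiber $F_o'$, and it is free in codimension one on $F_o'$ by hypothesis (iii)), the quotient singularities of $X$ are cyclic quotient singularities arising from the stabilizers of the isolated fixed points. One must argue these are terminal; the cleanest route is hypothesis (iv) together with the fact that $F_o$ is a Cartier divisor on $X$ away from those points and a Du Val anticanonical member, so that $X$ near such a point is a cyclic quotient of a Gorenstein germ along which a Du Val surface sits as an anticanonical divisor, and apply inversion of adjunction / the standard classification to conclude terminality. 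Then $-K_X$ is $f$-ample because $-K_{X'}=\pi^*(-K_X)$ up to the ramification, which is supported on fibers, so ampleness descends along the finite morphism $\pi$; and the general fiber of $f$ is isomorphic to a general fiber of $f'$, a del Pezzo surface.

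**Multiplicity and the anticanonical identification.** Next I would compute the multiplicity of the central fiber. By construction, $\pi^*F_o = F_o'$ and $f^*(o) = m_o F_o$ exactly as in Construction \ref{base-change} read in reverse, so the central fiber has multiplicity $m_o$. For the claim $F_o\sim -K_X$ near each point $P\in X$: away from the finitely many quotient points $F_o$ is Cartier, and one checks $K_X+F_o$ is trivial there because $K_{X'}+F_o'\sim 0$ near $F_o'$ (as $F_o'$ is a Gorenstein fiber of the Gorenstein del Pezzo bundle $f'$, so adjunction gives $(K_{X'}+F_o')|_{F_o'}=K_{F_o'}\sim 0$ since $F_o'$ is Du Val, and $K_{X'}+F_o'$ is supported on the fiber) and this descends. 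Near a quotient point $P$, by Lemma \ref{lemma-Kawamata-index-divisors} we have $F_o\sim q_P K_X$ for some integer $q_P$; pulling back by the \'etale-in-codimension-two cover (which is \'etale away from $P$ and a cyclic quotient at $P$) the relation becomes $F_o'\sim q_P K_{X'}$ near the preimage, and since $K_{X'}+F_o'\sim 0$ there we get $q_P\equiv -1 \bmod r_P$. By the Corollary following the computations in \S\ref{sect-proof}, $q_i\equiv -1\bmod r_i$ for all $i$ is precisely the condition that $F_o$ be regular, so the fiber is regular and $F_o\sim -K_X$ near every point where $F_o$ is not Cartier.

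**The main obstacle.** I expect the technical heart of the argument to be the verification that $X$ has \emph{terminal} (not merely canonical or log terminal) singularities at the quotient points — hypothesis (iv) controls the behavior on the surface $F_o$ but one must transfer this to a statement about the threefold germ. The natural tool is inversion of adjunction: if $F_o$ (which is $\QQ$-Cartier, being $\QQ$-linearly equivalent to $-K_X$) has Du Val, hence canonical, singularities, then $(X, F_o)$ is canonical near $P$, i.e.\ plt in fact since $F_o$ is a normal surface with canonical singularities, and then $K_X = -(K_X+F_o) + K_X + F_o$ — more precisely one writes $K_X + F_o \sim 0$ and uses that $(X,F_o)$ plt forces $X$ terminal away from $F_o$ and one checks the index-$r_P$ point lies appropriately. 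Alternatively, and perhaps more safely, one classifies directly: the cover is a cyclic quotient $\frac1{r_P}(1,-1,b_P)$-type by the structure of the $\muu_{m_o}$-action on the tangent space at a fixed point lying over $P$, and the constraint that the invariant hyperplane section $F_o'$ stays Du Val after quotient pins down the weights to the terminal list. Assembling this local analysis cleanly, while keeping track of which points of $X'$ lie over $o'$ versus over $F_o'$ generically, is where the care is needed; the global statements ($-K_X$ $f$-ample, general fiber del Pezzo, multiplicity $m_o$) are then routine descent along the finite cover $\pi$.
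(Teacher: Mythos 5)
Your overall strategy coincides with the paper's: everything reduces to showing that $X=X'/\muu_{m_o}$ is terminal at the images of the finitely many fixed points on $F_o'$, and this is extracted from inversion of adjunction for the pair $(X,F_o)$ together with the Gorenstein-ness of the Du Val surface $F_o$; the global statements (ampleness of $-K_X$ descending along the \'etale-in-codimension-one cover, multiplicity $m_o$, $K_X+F_o\sim 0$ locally, hence $q_P\equiv -1$ and regularity) are routine and you handle them as the paper does.

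The one caveat is that the step you yourself single out as the heart of the matter is the one you state incorrectly. Inversion of adjunction does \emph{not} directly give that $(X,F_o)$ is canonical; it gives that $(X,F_o)$ is plt near $F_o$, because $F_o$ is Du Val, hence klt. Passing from plt to canonical is a separate step: since $F_o$ is Gorenstein, $K_X+F_o$ is Cartier near $F_o$, so all discrepancies of the pair are integers, and plt (discrepancies $>-1$) then forces them to be $\ge 0$, i.e.\ the pair is canonical. Finally, for an exceptional divisor $E$ whose center is contained in $F_o$ one has $a(E,X)=a(E,X,F_o)+\operatorname{mult}_E F_o>0$, which gives terminality of $X$ \emph{at the points of} $F_o$ --- and that is where all the content is, since away from $F_o$ the quotient map is \'etale and terminality is automatic. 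Your sentence ``$(X,F_o)$ plt forces $X$ terminal away from $F_o$'' is therefore both backwards and, taken literally, leaves the quotient points unaddressed; with the chain reassembled in the order plt $\Rightarrow$ (Cartier) $\Rightarrow$ canonical $\Rightarrow$ terminal along $F_o$, your argument closes exactly as in the paper, and the alternative route you sketch (direct classification of the cyclic weights at the fixed points) is not needed.
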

\begin{proof}
In notation of Construction \ref{base-change} it is sufficient to 
show that $X$ has only terminal singularities.
Since $X'$ has only terminal singularities and
the action of $\muu_{m_o}$ is free outside of 
a finite number of points $P'_k$ lying on $F_o'$, 
the quotient $X$ is smooth outside of $\pi(P_k')\in F_o$.
By the inversion of adjunction \cite[17.6]{Utah} the
pair $(X,F_o)$ is PLT near $F_o$. Since $F_o$ is Gorenstein,
the divisor $K_X+F_o$ is Cartier. Hence the
pair $(X,F_o)$ is canonical near $F_o$ and so
$X$ has only terminal singularities.
\end{proof}

Now we apply Proposition \ref{prop-examples} to construct concrete
examples.
\begin{example}
Let $F_o'$ be a del Pezzo surface of degree $d:=K_{F_o'}^2$
with at worst Du Val singularities.
Assume that the group $\muu_{m_o}$, $m_o\ge 2$ 
acts on $F_o'$ freely in codimension one 
and so that the quotient $F_o:=F_o'/\muu_{m_o}$ has again only
Du Val singularities. Clearly, $F_o$ 
is del Pezzo surface and $m_oK_{F_o}^2=d$.
Hence, $d\ge m_o\ge 2$. 
For $d=2$, $3$, $4$, and $8$, 
according to \cite{Hidaka1981} there is an embedding
\[
\begin{array}{llll}
F_o'&\subset& \PP:=\PP(1,1,1,2) &\text{if $d=2$}
\\[5pt]
F_o'&\subset& \PP:=\PP^3 &\text{if $d=3$}
\\[5pt]
F_o'&\subset& \PP:=\PP^4 &\text{if $d=4$}
\\[5pt]
F_o'&\subset& \PP:=\PP^3\quad &\text{if $d=8$}
\end{array}
\]
Moreover, if $d=2$, $3$, $8$, then $F_o$ is a (weighted) hypersurface 
of degree $4$, $3$, $2$, respectively and if $d=4$, then $F_o'$
is an intersection of two quadrics. 
The action of $\muu_{m_o}$ on $F_o'$ induces the action on $\PP$.
We fix a linearization of this action 
and take semi-invariant coordinates $x_i$ in $\PP$.
Now we define $\muu_{m_o}$-equivariant del Pezzo bundle $f'\colon X'\to Z'$.
If $F_o'$ is smooth, we can take $X'=F_o'\times \CC_t$.
In general case, $X'$ is embedded into $\PP\times \CC_t$, $Z'=\CC_t$
and $f'$ is the projection, where 
$t$ is a coordinate in $\CC$ with $\wt t=1$. 
Consider for example the case 
$d\le 3$ (case $d=4$ is similar). 
Let $\phi=\phi(x_1,x_2,x_3,x_4)$ 
be the defining 
equation of $F_o'$ and let $\gamma_k$ be all monomials of weighted degree 
$d$. 
For each $\gamma_k$, let 
$n_k$ be the smallest positive integer such that
$ n_k \equiv -\wt \gamma_k \mod m_o$. Then the polynomial
$\psi(x_1,\dots,x_4; t):=\phi+ \sum c_k t^{n_k} \gamma_k$,\ $c_k\in \CC$ 
is $\muu_{m_o}$-semi-invariant. Let $X'=\{\psi=0\}\subset \PP\times \CC_t$.
By Betrtini's theorem, for sufficiently general constants $c_k$, 
fibers $F_t'$ of $f'$ over $t\neq 0$ are 
smooth del Pezzo surfaces. 
Hence we can apply Proposition \ref{prop-examples}
and get a del Pezzo bundle with a regular fiber of multiplicity $m_o$.

Note that the map $F_o'\to F_o$ is \'etale outside of $\Sing F_{o}$.
Hence there is a surjection $\pi_1(F_o\setminus \Sing F_{o})\twoheadrightarrow \muu_{m_o}$.
Conversely, assume that $F_o$ is a del Pezzo surface with Du Val singularities 
such that $\pi_1(F_o\setminus \Sing F_{o})\twoheadrightarrow \muu_{m_o}$.
Then there is an \'etale outside of $\Sing F_o$ cyclic $\muu_{m_o}$-cover 
$\upsilon\colon F_o'\to F_o$. Since $K_{F_o'}=\upsilon^*K_{F_o}$,
$F_o'$ is also a del Pezzo surface with Du Val singularities.
The fundamental groups of smooth loci of Du Val del Pezzo surfaces 
are described in \cite{Miyanishi-Zhang-1988}, \cite{Miyanishi-Zhang-1993}.
For example, from \cite{Miyanishi-Zhang-1988} we have the following 
examples with $\rho(F_o)=1$ (we do not list all the possibilities):
\par\medskip\noindent
\setlongtables\renewcommand{\arraystretch}{1.3}
\begin{longtable}{|c|l|c|c|c|c|l|}
\hline
$K_{F_o}^2$&$\Sing F_o$&$m_o$& $K_{F_o'}^2=K_{F_g}^2$&$\rho(F_o')$&$F_o'$, \ $\Sing F_o'$&type
\\[5pt]
\hline
\endfirsthead
\hline
$K_{F_o}^2$&$\Sing F_o$&$m_o$& $K_{F_o'}^2=K_{F_g}^2$&$\rho(F_o')$&$F_o'$, \ $\Sing F_o'$&type
\\[5pt]
\hline
\endhead
\hline
\endlastfoot
\hline
\endfoot
$1$& $A_1A_2A_5$&$6$&$6$&$4$&smooth&\type{2,3,6}
\\
$1$& $2A_4$&$5$&$5$&5&smooth&\type{5,5}
\\
2&$A_12A_3$&4&8&2&$\PP^1\times \PP^1$&\type{2,4,4}
\\
1&$A_3D_5$&4&4&4&$A_2$&\type{2,4,4}
\\
3&$3A_2$&3&9&1&$\PP^2$&\type{3,3,3}
\\
2&$A_2A_5$&3&6&3&$A_1$&\type{3,3,3}
\\
1&$A_8$&3&3&5&$A_2$&\type{3,3,3}

\\
4&$2A_1A_3$&2&8&1&$\PP(1,1,2)$&\type{2,2,2,2}
\\
3&$A_1A_5$&2&6&2&$A_2$&\type{2,2,2,2}
\\
2&$A_7$&2&4&3&$A_3$&\type{2,2,2,2}
\\
1&$D_8$&2&2&3&$D_5$&\type{2,2,2,2}
\end{longtable}
\end{example}

\begin{example}
In some cases we can give more explicit construction.
As was mentioned above, if $ F_o'$ is smooth, we can 
take $X'=Z'\times F_o'$.
Consider the following cases:
\begin{itemize}
\item 
$F_o'=\PP^2$, $\muu_3$ acts on $\PP^2_{x,y}$
by $x\mapsto \epsilon x$, $y\mapsto \epsilon^{-1} y$
(here $x$, $y$ are non-homogeneous coordinates on $\PP^2$ and $\epsilon^3=1$).
Then $\PP^2/\muu_3$ is a toric del Pezzo surface of degree 3 
having three
singular points of type $A_2$.
The quotient $f\colon X\to Z$ is a del Pezzo bundle 
with special fiber of type \type{3,3,3}. 
\item 
$F_o'=\PP^1\times \PP^1$, $\muu_2$ acts on $\PP^1_x\times \PP^1_y$
by $x\mapsto - x$, $y\mapsto - y$.
Then $\PP^1\times \PP^1/\muu_2$ 
is a del Pezzo surface of degree 4 
having four
singular points of type $A_1$.
The quotient $f\colon X\to Z$ is a del Pezzo bundle 
with special fiber of type \type{2,2,2,2}. 
\item 
$F_o'=\PP^1\times \PP^1$, $\muu_4$ acts 
by $x\mapsto y$, $y\mapsto - x$.
Then $\PP^1\times \PP^1/\muu_4$ 
is a del Pezzo surface of degree 2 
having two points of type $A_3$ and one point of type $A_1$.
The quotient $f\colon X\to Z$ is a del Pezzo bundle 
with special fiber of type \type{2,4,4}. 
\end{itemize}
\end{example}

Now we give some examples of irregular multiple fibers.
\begin{example}
Recall that any smooth del Pezzo surface of degree $1$ 
can be realized as a weighted hypersurface of degree $6$ in $\PP=\PP(1,1,2,3)$. 
Let \[
\phi(x_1,x_2,y,z)=a_1x_1^6+a_2x_2^6+y^2(b_1x_1^2+b_2x_2^2) +c z^2,\qquad a_i, b_j, c\in \CC^*
\]
be a polynomial of weighted degree $6$,
where $x_1$, $x_2$, $y$, $z$ are coordinates in $\PP$ with 
$\wt x_i=1$, $\wt y=2$, $\wt z=3$. 
Consider the hypersurface $F_o'\subset \PP$
given by $\phi=0$.
By Bertini's theorem, for sufficiently general $a_i, b_j, c$, the surface 
$F_o'$ is smooth outside of $P':=(0:0:1:0)$.
Consider the subvariety $X'$ in $\PP\times \CC_t$ given by 
$\phi+ty^3=0$ and let $f'\colon X'\to Z'=\CC$ be the natural projection.
Since $F_o'$ is the scheme fiber of the projection 
$f'\colon X'\to Z'$, the variety $X'$ is smooth outside of $P'$.
We identify $F_o'$ with the fiber over $t=0$. 
Then $f'$ is a del Pezzo bundle of degree $1$ having a unique 
singular point of type $\frac12(1,1,1)$ at $P'$.

Now let $\muu_2$ acts on $\PP\times \CC$ and $X'$ by
\[
(x_1,x_2,y,z; t) \longmapsto (x_1,-x_2,-y,-z; -t).
\]
The locus of fixed points $\Lambda$ consists of 
the line $L:=\{x_1=y=t=0\}$ and two isolated points 
$P':=(0:0:1:0; 0)$ and $P_1:=(1:0:0:0; 0)$.
Then $F_o'\cap \Lambda=\{P', Q_1, Q_2\}$, where
$Q_1\neq Q_2$ are points given by $x_1=y=a_2x_2^6+z^2=t=0$.
Let $f\colon X=X'/\muu_2\to Z=Z'/\muu_2$
be the quotient of $f'$. Since the action of $\muu_2$ on $X'$
is free in codimension one, $-K_X$ is $f$-ample and $F_o:=F_o'/\muu_2$
is a fiber of multiplicity $2$.
We show that $X$ has only terminal singularities.
By the above, $X$ is smooth outside of images of $P'$, $Q_1$, $Q_2$.
Since the $(X',Q_i)$ are smooth points, quotients $(X',Q_i)/\muu_2$
are terminal of type $\frac12(1,1,1)$.
Consider the affine chart 
$\{y\neq 0\}\simeq \CC^4_{x_1', x_2',z',t}/\muu_2(1,1,1,0)$
containing $P'$.
Here $X'$ is given by the equation $\phi(x_1',x_2',1,z')+t=0$ 
and the action of $\muu_2$ on $\PP$ induces the following action of 
$\muu_4$:
\[
(x_1',x_2',z',t) \longmapsto (\mt{i}x_1',-\mt{i}x_2',\mt{i}z',-t),\quad \mt{i}=\sqrt{-1}.
\]
Thus the quotients $(X',P')/\muu_2$ is a 
terminal cyclic quotient of type $\frac14(1,-1,1)$.
Therefore, $f\colon X\to Z$ is a del Pezzo bundle 
with special fiber of type \type{2,2,4}.
\end{example}

\begin{example}
As above let $\PP=\PP(1,1,2,3)$ and let 
\[
\phi(x_1,x_2,y,z)=a_1x_1^6+a_2x_2^6+c y^3,\qquad a_i, c\in \CC^*
\]
be a $\muu_2$-invariant polynomial of weighted degree $6$. 
Consider the hypersurface $F_o'\subset \PP$
given by $\phi=0$.
Again for sufficiently general $a_i, c$, the surface 
$F_o'$ is smooth outside of $P'':=(0:0:0:1)$.
Consider the subvariety $X'$ in $\PP\times \CC_t$ given by 
$\phi+tz^2=0$ and let $f'\colon X'\to Z'=\CC$ be the natural projection.
Then $f'$ is a del Pezzo bundle of degree $1$ having a unique 
singular point of type $\frac13(1,1,-1)$ at $P''$.
Now let $\muu_3$ acts on $\PP\times \CC$ and $X'$ by
\[
(x_1,x_2,y,z; t) \longmapsto (x_1,\epsilon x_2,\epsilon y,\epsilon z; \epsilon t),
\qquad \epsilon :=\exp(2\pi\mt{i}/3).
\]
The only fixed point on $X'$ is $P''$. 
As above, one can check that $(X',P'')/\muu_3$
is a terminal point of type $\frac19(-1,2,1)$.
Therefore, $X/\muu_3\to Z'/\muu_3$ is a del Pezzo bundle 
with special fiber of type \type{9}.
\end{example}

\begin{example}
\label{ex-not-basket}
Let $\PP:=\PP(1,1,1,2,2)$, let $x_1,x_2,x_3,y_1,y_2$
be coordinates,
and let $X'\subset \PP\times \CC$ be subvariety given by
\[
\begin{cases}
c_1y_1^2+c_2y_2^2&=a_1x_1^4+a_2x_2^4+a_3x_3^4
\\
ty_2&=b_1x_1^2+b_2x_2^2+b_3x_3^2,
\end{cases}
\]
where $t$ is a coordinate on $\CC$ and $a_i$, $b_j$, $c_k$ 
are sufficiently general constants. By Bertini's theorem 
$X'$ is smooth outside of $\{x_1=x_2=x_3=0\}\subset \Sing \PP$.
It is easy to check that $X'\cap \Sing \PP$ consists of two points
\[
\{P_1',\, P_2'\}=\{t=x_1=x_2=x_3=0,\ c_1y_1^2+c_2y_2^2=0\}
\]
and these points are 
terminal of type $\frac12(1,1,1)$. 
The projection $X'\to \CC$ is a del Pezzo bundle of degree $2$.
Define the action of $\muu_2$ by
\[
(x_1,x_2,x_3,y_1,y_2; t) \longmapsto (x_1,x_2,-x_3,y_1,-y_2; -t).
\]
There are four fixed points 
\[
\{Q_1',\dots, Q_4'\}=\{t=x_3=y_2=0,\ c_1y_1^2=a_1x_1^4+a_2x_2^4, \ b_1x_1^2+b_2x_2^2=0\}.
\]
The quotient $f\colon X'/\muu_2\to \CC/\muu_2$ is a del Pezzo bundle 
of type \type{2,2,2,2}. Note however that the image $P$ of 
$\{P_1',\, P_2'\}$ on $X'/\muu_2$ is a point of type $\frac12(1,1,1)$
and $F_o$ is Cartier at $P$ (i.e., $P\notin \B(F_o)$). 
\end{example}
\begin{example}
In the above notation define another action of $\muu_2$:
\[
(x_1,x_2,x_3,y_1,y_2; t) \longmapsto (x_1,x_2,-x_3,-y_1,-y_2; -t).
\]
Then the quotient $f\colon X'/\muu_2\to \CC/\muu_2$ is a del Pezzo bundle 
of type \type{4,4}. 
\end{example}

\begin{example}
\label{ex-I36}
Let $\PP:=\PP(1,1,1,1,2)$, let $x_1,x_2,x_3,x_4,y$
be coordinates,
and let $X'\subset \PP\times \CC$ be subvariety given by
\[
\begin{cases}
a_1x_1^2+a_2x_2^2+a_3x_1x_2+a_4x_3x_4=ty
\\
b_1x_1^3+b_2x_2^3+b_3x_3^3=x_4y
\end{cases}
\]
where $t$ is a coordinate on $\CC$ and $a_i$, $b_j$
are sufficiently general constants. Then the variety
$X'$ is smooth outside of the point $P'=\{x_1=x_2=x_3=x_4=0\}$
and $P'\in X'$ is of type $\frac12(1,1,1)$.
The projection $X'\to \CC$ is a del Pezzo bundle of degree $3$.
Define the action of $\muu_3$ by
\[
(x_1,x_2,x_3,x_4,y; t) \longmapsto 
(\omega^{-1}x_1,\omega^{-1}x_2,\omega^{}x_3,x_4,y; \omega^{}t).
\]
There are two fixed points $\{t=x_1=x_2=x_3=x_4y=0\}$
and quotients of these points are of types $\frac16(1,1,-1)$ and 
$\frac13(,1,-1)$.
Hence the quotient $f\colon X'/\muu_3\to \CC/\muu_3$ is a del Pezzo bundle 
of type \type{3,6}. 
\end{example}

\section{On del Pezzo bundles with fibers of multiplicity $\ge 5$.}
\label{sect-45}
\begin{notation}
\label{not-56}
Let $f\colon X\to Z\ni o$ be the germ of a del Pezzo bundle
and let $m_oF_o=f^*(o)$ be a fiber of 
multiplicity $m_o$. 
In this section we assume that $m_0\ge 5$, i.e., $F_o$ is of type \type{2,3,6}
or \type{5,5}.
\end{notation}

\begin{conjecture}
\label{conj-non-Gor}
In notation of \xref{not-56}
$f$ is a quotient
of a Gorenstein del Pezzo bundle by a cyclic
group acting free in codimension $2$ on $X$.
\end{conjecture}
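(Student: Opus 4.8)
The plan is to reduce the conjecture to a concrete statement about the singularities of $X$ along the multiple fiber, and then to attack that statement. Regard $f\colon X\to Z\ni o$ as a germ and apply Construction \ref{base-change}: the $m_o$-fold base change produces a $\muu_{m_o}$-cover $\pi\colon X'\to X$ which is \'etale outside the finite set $M$ of points where $F_o$ is not Cartier, with $X=X'/\muu_{m_o}$ and the $\muu_{m_o}$-action on $X'$ free outside $\pi^{-1}(M)$, hence free in codimension two. Since $\pi$ is \'etale in codimension one, $K_{X'}=\pi^*K_X$, so $-K_{X'}=\pi^*(-K_X)$ is $f'$-ample and $f'\colon X'\to Z'$ is again a del Pezzo bundle, now with central fiber of multiplicity one. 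Hence, to prove the conjecture it suffices to show that $X'$ is Gorenstein, i.e.\ that every point of $X'$ has index $1$.

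The next step is a local analysis of $\pi$. Over a Gorenstein point of $X$ lying outside $M$ the cover $\pi$ is \'etale, so $X'$ is Gorenstein there. Over a point $P\in M=\B(F_o)$ one uses that, by the standing assumption $m_o\ge 5$, Theorem \ref{main} forces $F_o$ to be of type \type{2,3,6} or \type{5,5}; these fibers are regular, so $K_X+F_o\sim 0$ near $P$ and $\gcd(q_P,r_P)=1$ for all $P\in\B(F_o)$ (automatic when $r_P$ is prime, and Lemma \ref{main-m6-b236} when $r_P=6$). Thus $\OOO_X(F_o)\simeq\omega_X^{-1}$ is a torsion class of order exactly $r_P$, the index of $(X,P)$; since $r_P\mid m_o$ for these two types, $\pi$ restricted to each connected component of $\pi^{-1}(U)$, for $U\ni P$ a small neighbourhood, is the canonical (index-one) cover of $(X,P)$, which is a Gorenstein terminal singularity. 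So $X'$ is Gorenstein over $M$ as well. The only remaining possibility is that $X$ has a non-Gorenstein point $P$ at which $F_o$ \emph{is} Cartier, i.e.\ $P\notin\B(F_o)$; there $\OOO_X(F_o)$ is locally trivial, $\pi$ is the trivial \'etale cover near $P$, and $X'$ acquires $m_o$ copies of $(X,P)$. Consequently the conjecture is equivalent to the claim:
\begin{quote}
\emph{if $f\colon X\to Z\ni o$ is a del Pezzo bundle germ carrying a multiple fiber $m_oF_o$ with $m_o\ge 5$, then $\B(F_o)$ contains every non-Gorenstein point of $X$.}
\end{quote}

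This claim is the main obstacle, and I expect it to be genuinely hard. The orbifold Riemann--Roch relations that drive the proof of Theorem \ref{main} --- identities \eqref{eq-RR-e3} and \eqref{eq-Delta} --- are blind to such a hidden non-Gorenstein point $P$: near $P$ the divisor $F_o$, hence also $-K_X-aF_o$, is Cartier, so every local contribution $c_P(\cdot)$ entering those identities vanishes. A genuinely new ingredient is therefore needed. I see three avenues: (i) run the orbifold Riemann--Roch machinery for auxiliary divisors whose local class at $P$ is nontrivial --- combinations of $K_X$ with the divisorial fibers, or multiples of $-K_X$ --- exploiting the vanishing theorems and positivity available on a del Pezzo bundle of small degree; (ii) use the geometry of $F_o$: since $(X,F_o)$ is PLT, $F_o$ is a log del Pezzo surface and $\pi$ restricts to a cover $F_o'\to F_o$ \'etale in codimension one with $F_o'$ a del Pezzo surface with Du Val (Gorenstein) singularities, so $(F_o,\Sing F_o)$ is severely restricted --- for $m_o=5,6$ the examples of \S\ref{sect-exam} suggest that $F_o$ has only Du Val points outside $\B(F_o)$ --- and then inversion of adjunction for the canonical pair $(X,F_o)$ should pin the singularities of $X$ along $F_o$ to the cyclic quotients over $\Sing F_o$; (iii) classify directly del Pezzo bundles over a curve of degree $5$ and $6$, whose general fiber has no moduli.

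The real work, I believe, lies in avenue (ii) or (iii): one must exclude, for instance, an isolated $\frac12(1,1,1)$ or $\frac13(1,1,-1)$ point of $X$ sitting where $F_o$ happens to be Cartier, or lying off $F_o$ altogether, and no purely local bookkeeping detects such a point. The argument will have to bring in global information about the fibration --- presumably an equivariant minimal model program analysis on $X'$ in the spirit of the proof of Proposition \ref{cor-point}, or a finer study of $H^0$ of multiples of $-K_{X'}$ --- in order to rule these configurations out.
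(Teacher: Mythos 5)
The target statement is labelled a \emph{conjecture} in the paper, and the paper itself does not prove it; it only establishes it under extra hypotheses. Measured against that, your first two paragraphs are correct and reproduce almost verbatim the paper's Proposition~\xref{prop-non-Gor}(i) and its proof: apply Construction~\xref{base-change}; over a point $P\in\B(F_o)$ the regularity of the fiber (types \type{2,3,6} and \type{5,5}) gives $q_P\equiv-1\mod r_P$, so $K_X+F_o\sim 0$ there, $F_o'=\pi^*F_o$ is Cartier, $K_{X'}+F_o'\sim 0$, and $X'$ is Gorenstein over $\B(F_o)$ (your description of $\pi$ over such a point as a disjoint union of index-one covers is a correct elaboration). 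Your reduction of the conjecture to the claim that $\B(F_o)$ contains every non-Gorenstein point of $X$ is therefore sound in the direction that matters (the claim implies the conjecture; the converse "equivalence" you assert would need a further remark and is not needed).

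The genuine gap is the claim itself, which you explicitly leave unproved: that for $m_o\ge 5$ there is no non-Gorenstein point of $X$ at which $F_o$ is Cartier. That is exactly the conjectural content, and no amount of the local Riemann--Roch bookkeeping of \S\S\xref{sect-prep}--\xref{sect-proof} can see such a point, as you correctly observe; note also that Example~\xref{ex-not-basket} shows the claim genuinely fails for $m_o=2$, so any proof must use $m_o\ge5$ in an essential way. For comparison, the paper's two partial results are: (a) Proposition~\xref{prop-non-Gor}(ii), which derives $\B(F_o)=\B$ from Reid's general elephant conjecture --- a Du Val member $S\in|-K_X|$ gives a minimal elliptic fibration without multiple fibers whose central fiber, having a component of multiplicity $\ge 5$, must be of Kodaira type $\tilde E_8$, and the complement of the central vertex of that graph produces exactly the $A_1$, $A_2$, $A_5$ (resp.\ $2A_4$) configuration; and (b) the last proposition of \S\xref{sect-45}, which derives it from the assumption that $F_o$ is log terminal via the formula $K_{F_o}^2+\rho(F_o)+\sum_{P\in F_o}\mu_P=10$ for surfaces with T-singularities, since the points of $\B(F_o)$ already contribute $\sum\mu_P\ge 8$. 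Your avenue (ii) is essentially route (b), but it too rests on an unproven input (normality and log terminality of $F_o$). So: the reduction is correct and coincides with the paper's, but the statement remains open, and your proposal is not a proof of it.
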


\begin{proposition}
\label{prop-non-Gor}
Notation as in \xref{not-56}.
If either
\begin{enumerate}
\item
$\B(F_o)=\B$, that is, each point $P\in F_o$ 
where $F_o$ is Cartier is Gorenstein on $X$, or
\item
a general member $S\in |-K_X|$ has only Du Val singularities
\textup(Reid's general elephant conjecture\textup),
\end{enumerate}
then \xref{conj-non-Gor} holds.
\end{proposition}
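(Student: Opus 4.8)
The plan is to use Construction \ref{base-change} to reduce the problem to the study of a $\muu_{m_o}$-action on a weak del Pezzo fibration $f'\colon X'\to Z'$ with central fiber of multiplicity one, and then to upgrade this fibration to a Gorenstein del Pezzo bundle $\muu_{m_o}$-equivariantly. Recall from \ref{base-change} that $\pi\colon X'\to X$ is a $\muu_{m_o}$-cover which is étale outside the finite set $M$ of points where $F_o$ is not Cartier, that $F_o'=\pi^*F_o$ has multiplicity one, and that $X=X'/\muu_{m_o}$ with the action free in codimension two. By the conjecture's conclusion it suffices to produce a Gorenstein del Pezzo bundle structure on $X'$ (or on a suitable $\muu_{m_o}$-equivariant birational model) and to verify that the quotient is again $X$, i.e.\ that the $\muu_{m_o}$-action there is still free in codimension two.

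Under hypothesis (i), every point of $X'$ away from $\pi^{-1}(M)$ is a smooth point of $X'$, since $\pi$ is étale there and $F_o$ is Cartier (hence, by hypothesis, $X$ is Gorenstein) at those points; and the points of $\pi^{-1}(M)$ are the preimages of the basket points, which by Table \ref{table} have indices $2,3,6$ (type \type{2,3,6}) or $5,5$ (type \type{5,5}), all dividing $m_o$. First I would check that the cyclic cover trivializes the index at these points: since $F_o'=\pi^*F_o$ and near a basket point $P_i$ we have $F_o\sim q_iK_X$ with $q_i\equiv -1 \bmod r_i$ (the fiber is regular, see the Corollary after the computations), the local index $r_i$ of $(X,P_i)$ equals $m_o$ up to the factor $m_o/r_i$, and pulling back along the $\muu_{m_o}$-cover makes $K_{X'}$ Cartier near $\pi^{-1}(P_i)$. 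Hence $X'$ is Gorenstein; being terminal and Gorenstein, it is a genuine Gorenstein del Pezzo bundle over $Z'$ (here one uses that $-K_{X'}=\pi^*(-K_X)$ is $f'$-ample because $\pi$ is finite). The action of $\muu_{m_o}$ on $X'$ was free in codimension two by construction, so \ref{conj-non-Gor} holds.

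Under hypothesis (ii), I would instead argue with a general elephant $S\in|-K_X|$: by assumption $S$ has only Du Val singularities, and $S':=\pi^*S\in|-K_{X'}|$ is then a $\muu_{m_o}$-invariant Du Val member (the cover being étale in codimension one on $S$, Du Val-ness is preserved). By adjunction $K_{S'}=(K_{X'}+S')|_{S'}\sim 0$, so $S'$ is a Du Val del Pezzo or canonical surface inside $X'$; I would use the classification/inversion-of-adjunction machinery already cited in the paper (\cite[17.6]{Utah}) to conclude that $(X',S')$ is canonical, that $X'$ is Gorenstein along $S'$, and hence — since $S'$ meets every fiber and $X'$ has at worst isolated non-Gorenstein points which would have to lie on some $S'$ in the pencil — that $X'$ is Gorenstein everywhere. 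Then again $X'$ is a Gorenstein del Pezzo bundle and the quotient presentation gives \ref{conj-non-Gor}.

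The main obstacle I anticipate is case (ii): controlling the singularities of $X'$ purely from those of a general elephant requires knowing that a general $S$ passes through all the non-Gorenstein points of $X$ (equivalently, that the linear system $|-K_X|$ has no base points forcing it to avoid a non-Gorenstein point where $F_o$ happens to be Cartier — exactly the phenomenon flagged in the Warning and realized in Example \ref{ex-not-basket}). So the delicate point is to show that under hypothesis (ii) such "hidden" non-Gorenstein points cannot occur, or that if they do, $S'$ still detects them; I would handle this by a local analysis of $|-K_X|$ near each point of the basket and of $M$, using that for $m_o\ge 5$ the basket is very restricted (\type{2,3,6} or \type{5,5}) and the orbifold Riemann--Roch bounds from \S\ref{sect-prep} pin down the local contributions.
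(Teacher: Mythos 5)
Your treatment of case (i) is essentially the paper's argument: near each basket point $q_i\equiv -1$, so $K_X+F_o\sim 0$ locally, and since $F_o'=\pi^*F_o$ is Cartier (it is the reduced fiber of $f'$ over a smooth curve point) and $\pi$ is \'etale in codimension one, $K_{X'}\sim -F_o'$ is Cartier; this part is fine (modulo the slip that Gorenstein terminal points need not be smooth --- but Gorenstein is all you need).

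Case (ii) has a genuine gap. The step ``$S'=\pi^*S$ is Du Val, hence by inversion of adjunction $X'$ is Gorenstein along $S'$'' is false as stated: a Du Val member of $|-K_X|$ through a non-Gorenstein terminal point is the \emph{generic} situation, not an obstruction to it. For example, for $X=\CC^3/\tfrac12(1,1,1)$ a general elephant is an $A_1$ point, the pair $(X,S)$ is PLT and $K_X+S$ is Cartier, yet $X$ has index $2$. What PLT plus Gorenstein-ness of $S'$ gives you is Cartier-ness of $K_{X'}+S'$, and since $S'\sim -K_{X'}$ this divisor class is trivial anyway, so you learn nothing about $K_{X'}$ itself. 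Moreover, the ``delicate point'' you flag at the end --- ruling out non-Gorenstein points of $X$ at which $F_o$ is Cartier (the phenomenon of Example \ref{ex-not-basket}) --- is exactly the content of case (ii), and your proposal does not actually supply an argument for it; orbifold Riemann--Roch only sees $\B(F_o)$, not $\B$, so it cannot detect such hidden points.

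The missing idea is a reduction of (ii) to (i) via Kodaira's classification of elliptic fibers. The general elephant $S$ gives an elliptic fibration $S\to Z$ with $K_S\sim 0$; its minimal resolution $\tilde S\to Z$ is a minimal elliptic fibration with $K_{\tilde S}\sim 0$, hence has no multiple fibers by the canonical bundle formula. The central fiber of $\tilde S\to Z$ contains the strict transform of $S\cap F_o$ with multiplicity $m_o\ge 5$, which forces Kodaira type $\tilde E_8$. Removing the vertex of multiplicity $m_o$ from the $\tilde E_8$ graph leaves exactly the configurations $A_1+A_2+A_5$ (for $m_o=6$), resp.\ $A_4+A_4$ (for $m_o=5$); these are all the singularities of $S$ over $o$, and since every non-Gorenstein point of $X$ on $F_o$ lies on $S$ (as $-K_X$ is non-Cartier there) and forces a singularity of $S$, they are exhausted by the basket points. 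Hence $\B=\B(F_o)$ and case (i) applies. Without this (or an equivalent) argument your proof of (ii) is incomplete.
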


\begin{proof}
Assume that (i) holds.
By Table \ref{table} 
near each singular point
$K_X+F_o\sim 0$. 
Apply Construction \ref{base-change}.
Then $F_o'=\pi^*F_o$ is Cartier.
Since $\pi$ is \'etale in codimension one, $K_{X'}+F_o'\sim 0$.
Hence, $X'$ is Gorenstein.

Now assume that (ii) holds.
Then $\varphi\colon S\to Z$ is an elliptic fibration with 
Du Val singularities. We have $K_S=(K_X+S)|_S\sim 0$.
Let $\mu \colon \tilde S\to S$ be the minimal resolution.
Since $S$ has only Du Val singularities, $K_{\tilde S}\sim 0$.
In particular, $\psi\colon \tilde S\to Z$ is a minimal elliptic fibration.
By Kodaira's canonical bundle formula 
$\psi$ has no multiple fibers \cite[Th. 12]{Kodaira-1964}.
Since $\psi^*o$ has a component of multiplicity $\ge 5$,
for $\psi^*o$ we have only one possibility $\tilde E_{8}$
in the classification of singular fibers
\cite[Th. 6.2]{Kodaira-1963}.
More precisely, $\Supp (\psi^*o)$ is a tree 
of smooth rational curves with self-intersection number $-2$
and the dual graph $\Gamma$ is the following:
\[
\label{eq-graph-tildeE8}
\xymatrix@R=0.8pc{
\stackrel{1}\circ\ar@{-}[r]
&\stackrel{2}\circ\ar@{-}[r]
& \stackrel{3}\circ\ar@{-}[r] 
&\stackrel{4}\circ\ar@{-}[r]
&\stackrel{5}\circ\ar@{-}[r]
&\stackrel{6}\circ\ar@{-}[r]
&\stackrel{4}\circ\ar@{-}[r]
&\stackrel{2}\circ
\\
&&&&&\stackunder{3}\circ\ar@{-}[u]
}
\]
Further we consider the case $m_o=6$ (the case $m_o=5$ is similar).
It is easy to see that the curve $S\cap F_o$ is irreducible and 
correspond to the central vertex $v$ of $\Gamma$.
Then $\Gamma \setminus \{v\}$ has three connected components corresponding to 
points of types $A_1$, $A_2$ and $A_5$ on $S$.
Therefore, $\B(F_o)=\B$.
\end{proof}

\begin{proposition}
In notation of \xref{not-56}, assume that $F_o$ is irreducible.
let $f_{\an}\colon X_{\an}\to Z_{\an}$
be the analytic germ near $F_o$.
Then
$X_{\an}$ is $\QQ$-factorial over $Z_{\an}$,
$\rho (X_{\an}/ Z_{\an})=1$, and $\rho(F_o)=1$.
\end{proposition}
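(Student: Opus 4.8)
The plan is to exploit the fact that $F_o$ is irreducible together with the classification in Table \ref{table}, which for $m_o \ge 5$ forces $F_o$ to be of type \type{2,3,6} or \type{5,5}. First I would reduce to a purely numerical statement about the local class group of $X_{\an}$ over $Z_{\an}$. Since $f$ is a del Pezzo bundle, $-K_X$ is $f$-ample, so $\operatorname{Pic}(X_{\an}/Z_{\an}) \otimes \QQ$ is generated by $-K_X$; hence $\rho(X_{\an}/Z_{\an}) = 1$ once we know this group is nonzero, which is automatic. The content is therefore $\QQ$-factoriality over $Z_{\an}$ and $\rho(F_o)=1$, and these I expect to come from a divisor-class count using the base change of Construction \ref{base-change}.

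\textbf{Key steps.} Apply Construction \ref{base-change} to get $\pi\colon X' \to X$, the $\muu_{m_o}$-cover étale outside the finite set $M$ of non-Cartier points of $F_o$, with $F_o' = \pi^*F_o$ a reduced fiber of $f'\colon X' \to Z'$ and a $\muu_{m_o}$-action with $X = X'/\muu_{m_o}$. Because $F_o$ is irreducible and $\pi$ is étale in codimension one, $F_o'$ is $\muu_{m_o}$-irreducible; combined with the regularity of $F_o$ (so $K_X + F_o \sim 0$ near each point of $M$, by the Corollary following the $K_{F_g}^2$ computation, i.e.\ $K_{X'}+F_o' \sim 0$), one sees $X'$ is Gorenstein along $F_o'$. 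Next I would show $X'_{\an}$ is $\QQ$-factorial over $Z'_{\an}$ and $\rho(X'_{\an}/Z'_{\an})=1$: over the two-dimensional local base this should follow because $X'_{\an} \to Z'_{\an}$ is a del Pezzo bundle whose generic fiber is a smooth del Pezzo surface of Picard number one is not claimed — rather one uses that any Weil divisor class on the germ $X'_{\an}$ restricts to a multiple of $-K$ on $F_g$ plus a vertical part, and verticals over the germ $Z'_{\an}$ are principal. Then I would descend: a $\muu_{m_o}$-invariant Weil divisor on $X_{\an}$ pulls back to one on $X'_{\an}$, hence (being $\QQ$-Cartier there) is $\QQ$-Cartier on the quotient, giving $\QQ$-factoriality of $X_{\an}/Z_{\an}$ and, via $\operatorname{Pic}(X'_{\an}/Z'_{\an})^{\muu_{m_o}} \otimes \QQ \to \operatorname{Pic}(X_{\an}/Z_{\an})\otimes\QQ$, that $\rho(X_{\an}/Z_{\an})=1$. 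Finally, $\rho(F_o)=1$: the restriction $\operatorname{Pic}(X_{\an}/Z_{\an}) \to \operatorname{Pic}(F_o)$ is surjective up to torsion (all divisor classes on the del Pezzo surface $F_o$ extend, since $F_o$ is Gorenstein-adjacent and $H^1(\OOO_{X_{\an}})=0$ over the local base), so $\rho(F_o) \le \rho(X_{\an}/Z_{\an}) = 1$.

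\textbf{Main obstacle.} The delicate point is the surjectivity, modulo torsion, of $\operatorname{Pic}(X'_{\an}/Z'_{\an}) \to \operatorname{Pic}(F_o')$ and the parallel statement downstairs — equivalently, that no divisor class on the central del Pezzo surface fails to extend to the total space over the germ. This is where one really uses that the base is a \emph{curve} germ, so that obstructions to extending a line bundle from the central fiber live in $H^2$ of the structure sheaf on an infinitesimal neighborhood, which vanishes because the fibers are rational surfaces, and that the generic fiber of $f'$ is a smooth del Pezzo surface whose Picard lattice is unobstructed in this sense. Handling the torsion correctly — i.e.\ distinguishing $\rho$ (a rank) from the full class group — and checking that the $\muu_{m_o}$-invariance is preserved under all these restriction maps is the part requiring care; once that bookkeeping is in place, the proof is a short chain of the observations above.
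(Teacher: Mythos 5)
There is a genuine gap, and it sits at the very center of your argument. You dismiss $\rho(X_{\an}/Z_{\an})=1$ as ``automatic'' on the grounds that $\Pic(X_{\an}/Z_{\an})\otimes\QQ$ is generated by $-K_X$. That is precisely what has to be proved: the paper's definition of a del Pezzo bundle explicitly does \emph{not} assume $\rho(X/Z)=1$, and the general fiber $F_g$ is a del Pezzo surface of degree $K_{F_g}^2=m_o\le 6$, hence (when smooth) of Picard number $10-m_o\ge 4$. So there are many divisor classes on the generic fiber besides multiples of $-K$, and the question is exactly whether any of them survive as (monodromy-invariant, extendable) classes on the germ. Your later claim that ``any Weil divisor class on the germ $X'_{\an}$ restricts to a multiple of $-K$ on $F_g$ plus a vertical part'' is the same assertion restated without justification, so the argument is circular at its key step. (The final reduction $\rho(F_o)=\rho(X_{\an}/Z_{\an})$ via the exponential sequence and $R^1f_{\an *}\OOO_{X_{\an}}=0$ is fine and agrees with the paper; the problem is everything feeding into it.)

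The paper closes this gap by a completely different mechanism: take a $\QQ$-factorialization $\hat X_{\an}\to X_{\an}$ and run the relative MMP over $Z_{\an}$, ending with $\bar X_{\an}\to\bar Z_{\an}$ of relative Picard number one. If $\bar Z_{\an}$ is a surface, the resulting $\QQ$-conic bundle has Du Val base by \cite{Mori-Prokhorov-2008}, forcing the fiber multiplicity to be at most $2$, contradicting $m_o\ge 5$. If $\bar Z_{\an}$ is a curve, one uses the numerical coincidence from Table \ref{table} that $K_{F_g}^2=m_o$ for both types \type{2,3,6} and \type{5,5}: the multiplicity of the central fiber is preserved along the MMP, so the output also has general fiber of degree $m_o$; since a divisorial contraction would strictly increase the anticanonical degree of the general fiber, no divisor is contracted, hence $\rho(\hat X_{\an}/Z_{\an})=1$ and the $\QQ$-factorialization was an isomorphism. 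Some such global input (the classification plus the $\QQ$-conic bundle theorem) is indispensable; a purely local/cohomological extension argument of the kind you sketch cannot rule out extra divisor classes coming from the high Picard rank of the fibers.
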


\begin{warning}
Here the $\QQ$-factoriality condition of
$X_{\an}$ means 
that every global Weil divisor of the total germ $X_{\an}$ along
$F_o$ is $\QQ$-Cartier, not that every analytic local ring of $X_{\an}$
is $\QQ$-factorial.
\end{warning}

\begin{proof}
Let $q\colon \hat X_{\an}\to X_{\an}$ be a $\QQ$-factorialization over 
$Z_{\an}$. Run the MMP over $Z_{\an}$. So, we have the following diagram
\[
\xymatrix{
&\hat X_{\an}\ar[dl]_{q}\ar@{-->}[dr]&
\\
X_{\an}\ar[d]_{f_{\an}}&&\bar X_{\an}\ar[d]^{\bar f_{\an}}
\\
Z_{\an}&&\bar Z_{\an}\ar[ll]_{g_{\an}}
}
\]
Here $\bar X_{\an}$ is $\QQ$-factorial over $\bar Z_{\an}$ and
$\rho (\bar X_{\an}/ \bar Z_{\an})=1$.
Note that $\hat X_{\an} \dashrightarrow \bar X_{\an}$ 
is a composition of flips and divisorial contractions that 
contract divisors to curves dominating $Z_{\an}$.
Let $\bar F_o$ be the proper transform of $F_o$ on $\bar X_{\an}$.
There are two possibilities:

1) $\bar Z_{\an}$ is a surface. Then $g_{\an}$ is a rational curve fibration 
with $\rho(\bar Z_{\an}/Z_{\an})=1$. Let $C:=\bar f_{\an} (\bar F_o)$.
Since $\bar X_{\an}$ has only isolated singularities, $\bar F_o=\bar f_{\an}^*(C)$.
Further, $g_{\an}^*(o)=nC$ for some $n\in \ZZ_{>0}$ and 
$\bar f_{\an}^*g_{\an}^*(o)=n\bar f_{\an}^*C=n\bar F_o$.
So, $n=m_o$.
By the main result of \cite{Mori-Prokhorov-2008}
the surface $\bar Z_{\an}$ has only Du Val singularities.
Therefore, $m_o=n\le 2$, a contradiction. 

2) $\bar Z_{\an}$ is a curve. Then $g_{\an}$ is an isomorphism
and $\bar f_{\an}\colon \bar X_{\an}\to \bar Z_{\an}$ is a del Pezzo bundle
with central fiber $\bar F_o$ of multiplicity $\bar m_o=m_o\ge 5$.
By Table \ref{table} the degree of the generic fiber of $\bar f_{\an}$ 
(and $f_{\an}$)
is equal to $m_o$.
This means that degrees of generic fibers of $\bar f_{\an}$
and $f_{\an}$ coincide. In particular, the MMP $\hat X_{\an}\dashrightarrow \bar X_{\an}$
does not contract any divisors. Hence, 
$\rho(\hat X_{\an}/Z_{\an}) =\rho(\bar X_{\an}/Z_{\an})=1$.
This implies that $q$ is an isomorphism and $\rho(X_{\an}/Z_{\an})=1$.
The last assertion follows from the exponential exact sequence and vanishing 
$R^1 f_{\an *}\OOO_{X_{\an}}=0$. 
\end{proof}

\begin{proposition}
Notation as in \xref{not-56}.
Conjecture \xref{conj-non-Gor}
holds under the additional assumption that $F_o$ has only log terminal singularities.
\end{proposition}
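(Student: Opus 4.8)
The plan is to show that, under the hypothesis that $F_o$ has only log terminal (equivalently, for a surface, quotient) singularities, the cyclic cover $f'\colon X'\to Z'$ produced by Construction \ref{base-change} is automatically Gorenstein, so that Conjecture \ref{conj-non-Gor} follows verbatim. The starting point is the structure recorded in Table \ref{table}: since $m_o\ge 5$, the fiber $F_o$ is regular, namely of type \type{2,3,6} or \type{5,5}, and by the corollary following the $\delta_0$ computation we have $K_X+F_o\sim 0$ near every point of $F_o$ where $F_o$ is not Cartier. The only obstruction to $X'$ being Gorenstein is therefore a point $P\in F_o$ at which $F_o$ \emph{is} Cartier but $X$ is \emph{not} Gorenstein — exactly the phenomenon illustrated by Example \ref{ex-not-basket}. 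So the whole proof reduces to ruling out such ``hidden'' non-Gorenstein points under the log terminal hypothesis on $F_o$.

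First I would apply Construction \ref{base-change} to obtain $f'\colon X'\to Z'$, the $\muu_{m_o}$-cover $\pi\colon X'\to X$ \'etale in codimension one, with $F_o'=\pi^*F_o=f'^*(o')$ of multiplicity one, and the $\muu_{m_o}$-action on $X'$ with $X=X'/\muu_{m_o}$. The key point is that $\pi$ is \'etale over the locus where $F_o$ is Cartier: since the cover is branched only along the non-Cartier locus of $F_o$, over a point where $F_o$ is Cartier the map $\pi$ is \'etale, hence $X'$ and $X$ are analytically isomorphic there. Thus if $X$ had a non-Gorenstein point $P$ off $\B(F_o)$, then $X'$ would have a non-Gorenstein point $P'$ above it lying on $F_o'$, and $K_{X'}+F_o'$ would fail to be Cartier at $P'$ (because $F_o'$ is Cartier there while $X'$ is not Gorenstein). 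On the other hand, the inversion of adjunction identity $(K_{X'}+F_o')|_{F_o'}\sim K_{F_o'}$ and the log terminal hypothesis on $F_o$ — which, since $\pi$ is \'etale in codimension one, transfers to $F_o'=\pi^*F_o$ — would force $F_o'$ to be a log terminal, hence Gorenstein-in-codimension, surface; combined with the fact that $F_o'$ is Cartier near $P'$ this should pin down the local picture.

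The heart of the argument, then, is a local statement: if $(X',P')$ is a threefold terminal singularity and $F'\ni P'$ is a prime divisor that is Cartier at $P'$ and such that the surface $F'$ has only log terminal singularities at $P'$, then $(X',P')$ is Gorenstein. I would prove this via the canonical-bundle adjunction/Reid's classification of terminal singularities: since $F'$ is Cartier at $P'$, writing the index-$r$ cover $\tilde X'\to X'$, the preimage of $F'$ is a divisor on which the $\muu_r$-action must be free in codimension one (as $F'$ is Cartier downstairs), forcing by the classification of terminal singularities $r=1$ — or, more directly, the divisor $F'$ Cartier at a terminal point forces $(X',P')$ to be a hyperplane section singularity, hence a $cDV$ point, hence Gorenstein. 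Granting this local lemma, $X$ is Gorenstein along $F_o$, $K_{X'}+F_o'\sim 0$ globally near $F_o'$ (being $\sim 0$ in codimension one and $F_o'$ being $\QQ$-Cartier), so $K_{X'}\sim -F_o'$ is Cartier, $X'$ is a Gorenstein del Pezzo bundle, and Conjecture \ref{conj-non-Gor} holds.

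The main obstacle I anticipate is the local lemma and, more precisely, making sure the log terminal hypothesis on $F_o$ really does propagate to the cover and really does suffice: one must check that $F_o'=\pi^*F_o$ has log terminal singularities (this uses that $\pi$ is \'etale in codimension one, so it is a cyclic quotient problem, and quotients of log terminal by finite groups acting freely in codimension one are log terminal — standard, but worth citing), and then that ``$F'$ Cartier and log terminal at a terminal point $P'$ of $X'$'' genuinely implies $X'$ Gorenstein there rather than merely that $K_{X'}+F'$ is log terminal. If the clean implication ``$F'$ Cartier at terminal $P'$ $\Rightarrow$ $P'$ Gorenstein'' should turn out to need $F'$ itself to be, say, normal and $\QQ$-Cartier with controlled discrepancies, the fallback is to invoke that a general elephant-type argument as in Proposition \ref{prop-non-Gor}(ii) applies: the log terminal condition on $F_o$ together with $K_{F_o}\sim 0$ on the generic fiber forces the \type{\tilde E_8} picture there too, giving $\B(F_o)=\B$ and reducing to case (i) of Proposition \ref{prop-non-Gor}, which we have already handled.
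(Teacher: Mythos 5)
Your reduction of the problem to ruling out ``hidden'' non-Gorenstein points off $\B(F_o)$ --- i.e.\ to proving $\B(F_o)=\B$ and then invoking Proposition~\ref{prop-non-Gor}(i) --- is exactly the right target, and it is also the paper's strategy. But the local lemma on which you hang everything is false: a prime divisor that is Cartier at a terminal point and has only a log terminal singularity there does \emph{not} force the ambient point to be Gorenstein. The paper's own Example~\ref{ex-not-basket}, which you cite, is a counterexample: there $P\in X$ has type $\frac12(1,1,1)$, $F_o$ is Cartier at $P$, and $F_o$ acquires only a cyclic quotient (hence log terminal) singularity there. Concretely, on $\CC^3/\muu_2(1,1,1)$ the invariant $xy+z^2$ cuts out a Cartier prime divisor with a quotient singularity at the origin while the ambient point has index $2$. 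Both of your justifications fail at the same spot: ``Cartier'' only means locally principal and does not make $(X',P')$ a hypersurface ($cDV$) point; and on the index-one cover the $\muu_r$-action is automatically free in codimension one on the preimage of any divisor through an isolated terminal point (the fixed locus is just the point), so no contradiction with $r>1$ arises. A telltale sign is that your argument never uses $m_o\ge 5$, whereas this hypothesis is essential: for $m_o=2$ the conclusion $\B(F_o)=\B$ genuinely fails, as Example~\ref{ex-not-basket} shows.

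Your fallback also does not work as stated: Proposition~\ref{prop-non-Gor}(ii) concerns a general elephant $S\in|-K_X|$, an elliptic surface over $Z$ with $K_S\sim 0$, not the fiber $F_o$; the Du Val condition on $S$ there is an additional hypothesis (Reid's conjecture), not a consequence of $F_o$ being log terminal, and $K_{F_o}$ is certainly not trivial. The paper's actual proof is a global numerical argument on the surface $F_o$: near each point of $\B(F_o)$ one has $K_X+F_o\sim 0$ by Table~\ref{table}, so $F_o$ is Du Val there by adjunction, and at the remaining points $F_o$ is a Cartier divisor on a terminal threefold; hence $F_o$ has only singularities of class $T$ in the sense of \cite{Kollar-ShB-1988}. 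The Noether-type formula $K_{F_o}^2+\rho(F_o)+\sum_{P}\mu_P=10$ of \cite{Hacking-Prokhorov-2005}, combined with $\sum_{P\in\B(F_o)}\mu_P\ge 8$ (this is precisely where $m_o\ge 5$ enters: the baskets $(2,3,6)$ and $(5,5)$ yield Du Val points $A_1+A_2+A_5$ or $A_4+A_4$), leaves no room for further contributions, forcing $K_{F_o}^2=\rho(F_o)=1$ and $\B(F_o)=\B$. You would need to replace your local lemma by an argument of this global kind.
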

\begin{proof}
Assume that $F_o$ has only log terminal singularities.
By Table \ref{table} near each point $P\in \B(F_o)$ we have $K_X+F_o\sim 0$.
By Adjunction $F_o$ has only Du Val singularities at these points.
In points $P\notin \B(F_o)$ the divisor $F_o$ is Cartier.
Hence $F_o$ 
has only singularities of type T \cite{Kollar-ShB-1988}.
By Noether's formula \cite[Prop. 3.5]{Hacking-Prokhorov-2005}
\[
K_{F_o}^2+\rho(F_o)+\sum_{P\in F_o} \mu_P=10.
\]
Since points in
$\B(F_o)$ correspond to distinct points on 
$X$, we have $\sum_{P\in \B(F_o)} \mu_P\ge 8$.
Hence, $K_{F_o}^2=1$, $\rho(F_o)=1$, and $\B(F_o)=\B$.
Now the assertion folows by Proposition \ref{prop-non-Gor}.
\end{proof}


\end{document}